\documentclass[12pt]{extarticle}
\usepackage{amsmath, amsthm, amssymb, hyperref, color}
\usepackage{etex}
\usepackage[shortlabels]{enumitem}
\usepackage{graphicx}
\usepackage[all]{xypic}
\usepackage{makecell}
\usepackage[final]{pdfpages}
\setboolean{@twoside}{false}
\usepackage{pdfpages}
\usepackage{caption}
\usepackage{subcaption}
\usepackage{scalefnt}
\usepackage{verbatim}
\usepackage{array}
\usepackage{blkarray}
\usepackage{amsmath, amssymb}
\tolerance 10000
\headheight 0in
\headsep 0in
\evensidemargin 0in
\oddsidemargin \evensidemargin
\textwidth 6.5in
\topmargin .25in
\textheight 8.7in

\newtheorem{theorem}{Theorem}
\numberwithin{theorem}{section}

\newtheorem*{theo}{Main Theorem}
\newtheorem{lemma}[theorem]{Lemma}

\newtheorem{definition}[theorem]{Definition}

\newtheorem{remark}[theorem]{Remark}
\newtheorem{example}[theorem]{Example}
\newtheorem{conjecture}[theorem]{Conjecture}

\newcommand{\ZZ}{\mathbb{Z}}
\newcommand{\NN}{\mathbb{N}}

\def\Gf{\mathfrak{G}}

 \date{}

\usepackage{tikz}
\usepackage{gauss}
\usetikzlibrary{matrix,decorations.pathreplacing,calc}

\pgfkeys{tikz/mymatrixenv/.style={decoration=brace,every left delimiter/.style={xshift=3pt},every right delimiter/.style={xshift=-3pt}}}
\pgfkeys{tikz/mymatrix/.style={matrix of math nodes,left delimiter=[,right delimiter={]},inner sep=2pt,column sep=1em,row sep=0.5em,nodes={inner sep=0pt}}}
\pgfkeys{tikz/mymatrixbrace/.style={decorate,thick}}
\newcommand\mymatrixbraceoffseth{0.5em}
\newcommand\mymatrixbraceoffsetv{0.2em}

\newcommand*\mymatrixbraceright[4][m]{
    \draw[mymatrixbrace] ($(#1.north west)!(#1-#3-1.south west)!(#1.south west)-(\mymatrixbraceoffseth,0)$)
        -- node[left=2pt] {#4} 
        ($(#1.north west)!(#1-#2-1.north west)!(#1.south west)-(\mymatrixbraceoffseth,0)$);
}
\newcommand*\mymatrixbraceleft[4][m]{
    \draw[mymatrixbrace] ($(#1.north east)!(#1-#2-1.north east)!(#1.south east)+(\mymatrixbraceoffseth,0)$)
        -- node[right=2pt] {#4} 
        ($(#1.north east)!(#1-#3-1.south east)!(#1.south east)+(\mymatrixbraceoffseth,0)$);
}

\newcommand*\mymatrixbracebottom[4][m]{
    \draw[mymatrixbrace] ($(#1.south west)!(#1-1-#3.south east)!(#1.south east)-(0,\mymatrixbraceoffsetv)$)
        -- node[below=2pt] {#4} 
        ($(#1.south west)!(#1-1-#2.south west)!(#1.south east)-(0,\mymatrixbraceoffsetv)$);
}

\definecolor{block}{RGB}{0,162,232}


\newenvironment{blockmatrix}{%
  \vcenter\bgroup\hbox\bgroup
  \tikzpicture[
    x=1.5\baselineskip,
    y=1.5\baselineskip,
  ]%
}{%
  \endtikzpicture
  \egroup
  \egroup
}

\newcommand*{\block}[1][block]{%
  \blockaux{#1}%
}
\def\blockaux#1(#2,#3)#4(#5,#6){%
  \draw[fill={#1}]
  let \p1=(#2,#3),
      \p2=(#5,#6),
      \p3=(#2+#5,#3+#6),
      \p4=(#2+#5/2,#3+#6/2)
  in
    (\p1) rectangle (\p3)
    (\p4) node {$#4$}
  ;%
}

\title{\textbf{Finite phylogenetic complexity and combinatorics of tables}}

\author{Mateusz Micha{\l}ek and Emanuele Ventura}

\begin{document}

\maketitle

\begin{abstract}

We prove that the phylogenetic complexity -- an invariant introduced by Sturmfels and Sullivant -- of any finite abelian group is finite.

\end{abstract}

\thanks{2010 \emph{Mathematics Subject Classification}. Primary 52B20, Secondary 14M25, 13P25}

\section{Introduction}

The aim of this article is to prove the finiteness of an intriguing invariant of finite abelian groups, called \emph{phylogenetic complexity}. The invariant was introduced in a seminal paper by Sturmfels and Sullivant \cite{SS}, where it appeared in relation to phylogenetic models. In short, to a Markov process encoded by an abelian group $G$ on a tree $T$ one associates a toric variety $X(G,T)$, of particular relevance in algebraic statistics \cite{4aut, PS}. The setting above is known as a \emph{group--based model}. 

We do not describe the relations to phylogenetics in this paper, referring the interested reader to \cite{allman2003phylogenetic, casanellas2012algebraic, DBM, jaDissert}.
 Instead, in precise, purely mathematical language we present a natural construction of \emph{a family of lattice polytopes $P_{G,n}$} associated to any finite abelian group $G$ -- Definition \ref{def:PGn}. 
These polytopes should be considered as the simpliest combinatorial objects encoding the group action. 

Associating to interesting combinatorial objects a polytope and investigating its properties is nowadays a well-developed and powerful tool on the edge of combinatorics and toric geometry \cite{Stks, herzog2002discrete, ohsugi1998normal, sturmfels2008toric}.  However, our knowledge of properties of the polytopes $P_{G,n}$ associated to such basic objects as finite abelian groups is still very limited. This may be even more surprising, as for various groups $G$, these polytopes relate not only to phylogenetics, but also mathematical physics through conformal blocks and moduli spaces \cite{kubjas2014conformal, Man2, Man3, SX}. 

Phylogenetic complexity governs the degrees of generators of the ideal of the variety $X(G,T)$. Using the language of toric geometry, one is interested in the generators of integral relations among the vertices of $P_{G,n}$. For the introduction to toric geometry we refer the reader to \cite{CLS, Ful}. The objects that encode the group action and correspond to vertices of $P_{G,n}$ are called \emph{flows}.

\begin{definition}[{\bf Flow} \cite{JaAdvGeom}, \cite{BW}]  \label{def:flow}
Let $G$ be a finite abelian group and $n\in \mathbb N$. A flow 
is a sequence of $n$ elements of $G$ summing up to $0\in G$, the neutral element of $G$. The set of flows is equipped with a group structure via the coordinatewise action. 
The group of flows $\Gf$ is (non-canonically) isomorphic to $G^{n-1}$.
\end{definition}
Hence, in our article we study possible relations among $n$-tuples of elements of $G$ summing up to $0$. Let $T_0$ and $T_1$ be two matrices or {\it tables} of the same size, whose rows are flows. These two tables are \emph{compatible} if and only if, for each $1\leq i \leq n$, the $i$-th column of $T_0$ and the $i$-th column of $T_1$ are the same multisets -- cf.~Example \ref{ex:rel}. Compatible tables correspond to binomials in the ideal $I(X(G,K_{1,n}))$, where $K_{1,n}$ is a star (also called a claw-tree) -- the unique tree with one inner vertex and $n$ leaves.

\begin{definition}[{\bf Phylogenetic complexity} \cite{SS}]

Let $T$ be a tree, let $K_{1,n}$ be the star with $n$ leaves, and let $\phi(G,T)$ be the maximal degree of a generator in a minimal generating set of $I(X(G,T))$. Let $\phi(G,n)=\phi(G,K_{1,n})$. We define the phylogenetic complexity $\phi(G)$ of $G$ to be $\sup_{n\in \mathbb N} \phi(G,n)$. 

\end{definition}

The main theorem of the present article is the following:  

\begin{theo}[{\bf Theorem \ref{finite phylo}}]\label{thm:main}

For any finite abelian group $G$, the phylogenetic complexity $\phi(G)$ is finite.

\end{theo}

Let us present the state of the art in the following table. 
\vspace{2mm}
\begin{center}
\begin{tabular}{|p{3,1 cm}|p{1,8 cm}|p{2,3 cm}|p{2,7 cm}|p{4,5 cm}|}
\hline
&\multicolumn{4}{ |c| }{Group-based Models} \\
\hline
Polynomials defining:& $\ZZ_2$ & $\ZZ_3$ & $\ZZ_2\times\ZZ_2$ & $G$ \\
\hline
Generators of the ideal & Degree $2$ \cite{SS} & Degree $3$ \cite{jaZ3nowy} & Conjecture \cite[Conjecture 30]{SS}  & Finite by Theorem \ref{finite phylo}, Degree $\leq |G|$ \cite[Conjecture 29]{SS} \\
\hline
Projective scheme & & Degree $3$ \cite{Marysianowyz3} & Degree $4$ \cite{JaJCTA} & Finite \cite{JaJCTA}\\
\hline
Set-theoretically&&&&Finite \cite{DE}
\\
\hline
On a Zariski open subset & && Degree $4$ \cite{JaAdvGeom} & Degree $\leq|G|$ \cite{CFSM, casanellas2015complete}\\
\hline
\end{tabular}
\end{center}
\vspace{2mm}

A lot of the related results concern finiteness. For equivariant models (which include the class of models described in this article) the finiteness result on set-theoretic level was proved in \cite{DK, DE}. Hence, our paper can be regarded as a stronger result, but for a smaller class. Obtaining finiteness result on an ideal-theoretic level for equivariant models would be a major achievement, far extending the results of \cite{DK2}. However, this is beyond any of the methods described in this paper, where we focus on group-based models. Finiteness plays also an increasingly important role in the context of toric varieties; cf.~\cite{JanNowy}.

Finally, we would like to mention the reduction that we use from the very beginning, previously obtained by Sturmfels and Sullivant \cite{SS}. Although, in general, one is interested in arbitrary trees, it is enough to consider claw-trees. This is due to the construction of toric fiber products \cite{Sethtfp}.

The structure of the article is as follows. In Section \ref{sec:notation} we describe the basic notation. In particular, we recall how one encodes binomials in $I(X(G,K_{1,n}))$ as special pairs of tables with group elements. Section \ref{sec:proof} contains the main result. First, in Subsection \ref{subs:sketch}, we present the sketch of the proof, without any technical details and then the complete proof in Subsection \ref{subs:proof}. We hope that some of the ideas of the paper can be made effective. In particular, in future work we plan to prove \cite[Conjecture 30]{SS}.

\section{Binomials, Tables and Moves}\label{sec:notation}

This section records definitions and notation needed in the rest of the paper.\\
\indent  Let $G$ be a finite abelian group and $n\in\NN$. In Definition \ref{def:flow}, we introduced the most important algebro-combinatorial objects in our setting: $n$-tuples of group elements summing to $0$, called \emph{flows}. From the point of view of toric geometry and phylogenetics, flows correspond to \emph{monomials} parameterizing our variety $X(G , K_{1,n})$ \cite{SS, JaJalg}. Relations among flows -- described by \emph{compatible} tables -- encode the binomials in $I(X(G, K_{1,n}))$. It is a standard approach in toric geometry to represent the parameterizing monomials by their exponents, as points in a lattice. The polytope, that is the convex hull of such points, captures the geometry of the parameterized variety. For the sake of completeness we present the polytopes corresponding to $X(G , K_{1,n})$.

\begin{definition}[{\bf Polytope $P_{G,n}$}]\label{def:PGn}
Consider the lattice $M\cong \ZZ^{|G|}$ with a basis corresponding to elements of $G$. Consider $M^n$ with the basis $e_{(i,g)}$ indexed by pairs $(i,g)\in [n]\times G$. We define an injective map of sets:
$\Gf\rightarrow M^n,$
by $(g_1,\dots,g_n)\longmapsto \sum_{i=1}^n e_{(i,g_i)}$. The image of this map defines the vertices of the polytope $P_{G,n}$.
\end{definition}

\begin{example}[{\bf \cite{jaZ3nowy}}]

For $G=(\ZZ_2,+)$ and $n=3$, we have four flows:
$$(0,0,0),(0,1,1),(1,0,1),(1,1,0)\in \ZZ_2\times \ZZ_2\times \ZZ_2.$$

\noindent Hence, the polytope $P_{\ZZ_2,3}$ has the following four vertices corresponding to the flows above:
$$(1,0,1,0,1,0),(1,0,0,1,0,1),(0,1,1,0,0,1),(0,1,0,1,1,0)\in\ZZ^2\times \ZZ^2\times \ZZ^2,$$
where $(1,0)\in \ZZ^2$ corresponds to $0\in \ZZ_2$ and $(0,1)\in\ZZ^2$ corresponds to $1\in \ZZ_2$.
\end{example}

A more sophisticated example is presented in \cite[Example 4.1]{JaJalg}. Binomials may be identified with a pair of tables of the same size $T_0$ and $T_1$ of elements of $G$, regarded up to row permutation. Each row of such tables has to be a flow. The identification is as follows. Every binomial is a pair of monomials; the variables in such monomials correspond to flows, given by a collection of $n$ elements in $G$. Every monomial is viewed as a table, whose rows are the variables appearing in the monomial; the number of rows of the corresponding table is the degree of the monomial. Consequently, a binomial is identified with the pair of tables encoding the two monomials respectively. \\
\indent A binomial belongs to $I(X(G , K_{1,n}))$ if and only if the two tables are \emph{compatible}, i.e.~for each $i$, the $i$-th column of $T_0$ and the $i$-th column of $T_1$ are equal as multisets.\\ 
\indent In order to generate a binomial -- represented by a pair of tables $T_0$, $T_1$ -- by binomials of degree at most $d$ we are allowed to select a subset of rows in $T_0$ of cardinality at most $d$ and replace it with a compatible set of rows, repeating this procedure until both tables are equal. 

\begin{example}[{\bf \cite{jaZ3nowy}}]\label{ex:rel}

For $G=(\ZZ_2,+)$ and $n=6$ consider the following two compatible tables:
$$
T_0=\begin{bmatrix}
\color{red}{1} & \color{red}{1} & \color{red}{1} &  \color{red}{1} & \color{red}{1} & \color{red}{1} \\
\color{red}{0} & \color{red}{0} & \color{red}{0} & \color{red}{0} & \color{red}{0} & \color{red}{0} \\
\color{brown}{1} & \color{brown}{1} & \color{brown}{0} & \color{brown}{0} & \color{brown}{0} & \color{brown}{0}\\
\end{bmatrix}
\textnormal{ and   } 
T_1=\begin{bmatrix}
\color{blue}{0} & \color{blue}{1} & \color{blue}{0} & \color{blue}{1} & \color{blue}{0}& \color{blue}{0} \\
1 & 0 & 1 & 0 & 0 & 0 \\ 
1 & 1 & 0 & 0 & 1 & 1 \\
\end{bmatrix}.
$$
\noindent 
Note that the red subtable of $T_0$ is compatible with the table 
$$
T'=\begin{bmatrix}
\color{blue}{0} & \color{blue}{1} & \color{blue}{0} & \color{blue}{1} & \color{blue}{0}& \color{blue}{0} \\
\color{brown}{1} & \color{brown}{0} & \color{brown}{1} & \color{brown}{0} & \color{brown}{1} & \color{brown}{1}\\
\end{bmatrix}.
$$
\noindent Hence, we may \emph{exchange} them obtaining:
$$
\tilde T_0=\begin{bmatrix}
\color{blue}{0} & \color{blue}{1} & \color{blue}{0} & \color{blue}{1} & \color{blue}{0}& \color{blue}{0} \\
\color{brown}{1} & \color{brown}{0} & \color{brown}{1} & \color{brown}{0} & \color{brown}{1} & \color{brown}{1}\\ 
\color{brown}{1} & \color{brown}{1} & \color{brown}{0} & \color{brown}{0} & \color{brown}{0} & \color{brown}{0}\\
\end{bmatrix}.
$$

\noindent Note that $T_0$ and $\tilde T_0$ are \emph{compatible}.  Now, the brown subtable of $\tilde T_0$ is compatible with the table
$$
T''=\begin{bmatrix}
1 & 0 & 1 & 0 & 0 & 0 \\
1 & 1 & 0 & 0 & 1 & 1 \\
\end{bmatrix}.
$$
\noindent Finally, we exchange them obtaining $T_1$. Hence we have a sequence of tables $T_0 \rightsquigarrow \tilde T_0 \rightsquigarrow T_1$. More specifically, we started from a degree three binomial given by the pair $T_0, T_1$ and we generated it using degree two binomials, called {\it quadratic moves}; see also Example \ref{exquadmove}. 

\end{example}

In what follows, \emph{quadratic moves}, i.e.~binomials of degree two will play a crucial role. First, let us give the precise definition and an illustrative example.

\begin{definition}[{\bf Quadratic Moves}]\label{def:QM}

Let $T$ be a table -- whose rows are flows -- of elements of $G$; let $r_i$ and $r_j$ be two rows of $T$. For any subsequence $\lbrace r_{i,l_1},\ldots r_{i,l_t}\rbrace$ of $r_i$, we define two rows $s_i$ and $s_j$ whose elements are the following: 
\begin{enumerate}
\item[(i)] $s_{i,k}=r_{i,k}$ if $k\neq l_1,\ldots, l_t$, otherwise $s_{i,k}=r_{j,k}$; 
\item[(ii)] $s_{j,k}=r_{j,k}$ if $k\neq l_1,\ldots, l_t$, otherwise $s_{j,k}=r_{i,k}$. 

\end{enumerate}
\noindent The transformation of $r_i$ and $r_j$ into $s_i$ and $s_j$ described above is a {\it quadratic move} if $\sum_{k=1}^t r_{i, l_k} = \sum_{k=1}^t r_{j, l_k}$; in other words, if the differences sum up to $0\in G$. We note that this condition is equivalent to the fact that $s_i$ and $s_j$ are flows.

\end{definition}

To illustrate the definition of quadratic moves, we consider the following example, to be compared with Example \ref{ex:rel}. 

\begin{example}\label{exquadmove}

Let $G=(\mathbb Z_2, +)$. Let $T$ be the following $2 \times 3$ table of elements in $\mathbb Z_2$: 
$$
T=\begin{bmatrix}
{\color{red} 1} & 1 & {\color{red} 0}  & 1 & 1 \\
{\color{blue} 0}  & 1 & {\color{blue} 1}  & 0  & 0 \\
\end{bmatrix}.
$$
\noindent The two rows $r_1$ and $r_2$ are flows, since their elements sum up to the $0\in \ZZ_2$. We exchange the red subsequence of elements in the first row with the blue subsequence of elements in the second row. The rows $s_1$ and $s_2$, corresponding to the chosen (red) subsequence as in Definition \ref{def:QM}, are the two rows of the following table: 

$$
\tilde T=\begin{bmatrix}
0 & 1 & 1 & 1 & 1 \\
1 & 1 & 0  & 0  & 0 \\
\end{bmatrix}.
$$
\noindent This is a quadratic move, since $s_1$ and $s_2$ are still flows. Hence, the table $T$ is transformed into the table $\tilde T$ by the quadratic move above. Note that quadratic moves preserve, up to permutation, each column of a table. In particular, $T$ and $\tilde T$ are two compatible tables, i.e. their columns are the same as multisets. 

\end{example}

\section{Finite phylogenetic complexity for abelian groups}\label{sec:proof}

The aim of this section is to use the combinatorics of tables to prove finiteness of the phylogenetic complexity of a group-based model for any finite abelian group $G$. 

\subsection{Idea of the proof}\label{subs:sketch}
Before going into technical details, let us present here the basic ideas of Theorem \ref{finite phylo}. 

The general strategy is to prove that the function $\phi(G,n)$ is eventually constant for large $n$. Hence, we start with two compatible $d\times n$ tables $T_0$ and $T_1$ for large $n$ and we want to transform $T_0$ to $T_1$. The main objective is the proof of Lemma \ref{Two columns with same rows}:
\emph{One can transform $T_0$ and $T_1$, independently, using quadratic moves, in such a way that there exist two columns $c_j$, $c_{j+1}$ on which both tables exactly agree.}
Once this aim is achieved, the induction becomes clear -- the precise argument is presented in the last paragraph of the proof of Theorem \ref{finite phylo}. 
The most involved part is to show Lemma \ref{Two columns with same rows}. First, we pass to subtables. For a table $T$, we denoted by $T'$ the subtable containing all rows, but only those columns where a given element $g\in G$ is one of the (possibly many) most frequent group elements. This is not a severe restriction
 --  cf.~Remark \ref{most freq. el.}. Such a `reference' element $g$ is crucial throughout the proof. Note also that, due to compatibility, the  indices of columns of the subtables $T_0'$ and $T_1'$ are the same, as the most frequent elements of any $i$-th column in $T_0$ and $T_1$ coincide. In particular, $T_0'$ and $T_1'$ are compatible (although their rows do not have to be flows any more). In the proof, it is shown that it is easier to move elements that are {\it frequent} in a table, than those that are {\it rare}; the latter ones are  called \emph{dots}. A precise definition, independent on the choice of $T_0$ or $T_1$, of frequent and rare elements is given in Definition \ref{def:dots}.\\
\indent  Equipped with these definitions, it is enough to prove Lemma \ref{Two columns not containing dots in same row}:
\emph{One can transform $T_0$ and $T_1$, independently, using quadratic moves, in such a way that there exist two columns $c_j$, $c_{j+1}$ such that any row in $T_0$ or $T_1$ contains at most one dot in columns $c_j$ and $c_{j+1}$.}
Indeed, once the above statement is proven, as the tables are considered up to row permutations, we can make all dots in both columns in $T_0$ exactly equal to corresponding dots in $T_1$. Then Lemma \ref{Two columns with same rows} follows as the entries that are not dots can also be adjusted -- details are in the proof of the lemma. 

Hence, the hard part of the proof of Theorem \ref{finite phylo} lies in the proof of Lemma \ref{Two columns not containing dots in same row}. Here the ideas are as follows. First, (as we passed from $T$ to a subtable $T'$ where a given element is one of most frequent in every column) we will be passing to thinner and thinner subtables. However, due to technical reasons, we must also allow their horizontal subdivisions, which motivate the following definition.

\begin{definition}[Vertical stripe]
Given any table $T$, we define a \emph{vertical stripe} to be:
\begin{itemize}
\item a choice of some number of consecutive columns of $T$,
\item a subdivision of rows into parts in the chosen columns.
\end{itemize}
Less formally, a vertical stripe is a collection of disjoint subtables in the same columns, that cover all rows of $T$.
\end{definition}
Two examples of vertical stripes are presented in Figure (\ref{fig:em}). One consists of the whole colored part, where the subdivision into three subtables is given by two thick white horizontal stripes. The second stripe is the yellow one with the subdivision into nine subtables.

We would like to find a vertical stripe with (at least) two columns that has at most one dot in each row. Instead, we consider more general subtables that make vertical stripes: each subtable has $k$ columns with at most $s$ dots in each row. Further, we need to control how many distinct elements $r$ of $G$ appear as dots in the subtable. These subtables do not have to contain all rows, but appear in collections that form a vertical stripe, i.e.~the collection covers all rows.\\

\begin{equation}\label{fig:em}
\begin{blockmatrix}

\block[white] (-4.5, 1.9) (4.5, 4.8)

\block[white] (4.8, 1.9) (4.5, 4.8)

 \block[green](0, 6.3) (0.3, 0.4)
\block[green](0.3, 6.3) (0.3,0.4)
    \block[green](0.6, 6.3) (0.3,0.4)
    \block[yellow](0.9, 6.3) (0.3,0.4)
    \block[red](1.2, 6.3) (0.3,0.4)
    \block[green](1.5, 6.3) (0.3, 0.4)
    \block[green](1.8, 6.3) (0.3,0.4)
      \block[green](2.1, 6.3) (0.3,0.4)
    \block[green](2.4, 6.3) (0.3,0.4)
      \block[green](2.7, 6.3) (0.3,0.4)
    \block[green](3.0, 6.3) (0.3,0.4)
          \block[green](3.3, 6.3) (0.3,0.4)
    \block[green](3.6,  6.3) (0.3,0.4)
    \block[green](3.9,  6.3) (0.3,0.4)
    \block[green](4.2,  6.3) (0.3,0.4)
    \block[green](4.5,  6.3) (0.3,0.4)

   \block[green](0, 5.8) (0.3, 0.4)
\block[green](0.3, 5.8) (0.3,0.4)
    \block[green](0.6, 5.8) (0.3,0.4)
    \block[yellow](0.9, 5.8) (0.3,0.4)
    \block[green](1.2, 5.8) (0.3,0.4)
    \block[green](1.5, 5.8) (0.3, 0.4)
    \block[green](1.8, 5.8) (0.3,0.4)
      \block[green](2.1, 5.8) (0.3,0.4)
    \block[green](2.4, 5.8) (0.3,0.4)
      \block[green](2.7, 5.8) (0.3,0.4)
    \block[green](3.0, 5.8) (0.3,0.4)
          \block[green](3.3, 5.8) (0.3,0.4)
    \block[red](3.6,  5.8) (0.3,0.4)
    \block[green](3.9,  5.8) (0.3,0.4)
    \block[green](4.2,  5.8) (0.3,0.4)
    \block[green](4.5,  5.8) (0.3,0.4)

  \block[green](0, 5.3) (0.3, 0.4)
\block[green](0.3, 5.3) (0.3,0.4)
    \block[green](0.6, 5.3) (0.3,0.4)
    \block[yellow](0.9, 5.3) (0.3,0.4)
    \block[green](1.2, 5.3) (0.3,0.4)
    \block[green](1.5, 5.3) (0.3, 0.4)
    \block[green](1.8, 5.3) (0.3,0.4)
      \block[green](2.1, 5.3) (0.3,0.4)
    \block[green](2.4, 5.3) (0.3,0.4)
      \block[green](2.7, 5.3) (0.3,0.4)
    \block[red](3.0, 5.3) (0.3,0.4)
          \block[green](3.3, 5.3) (0.3,0.4)
    \block[green](3.6,  5.3) (0.3,0.4)
    \block[green](3.9,  5.3) (0.3,0.4)
    \block[green](4.2,  5.3) (0.3,0.4)
    \block[green](4.5,  5.3) (0.3,0.4)

  \block[green](0, 4.6) (0.3, 0.4)
\block[green](0.3, 4.6) (0.3,0.4)
    \block[green](0.6, 4.6) (0.3,0.4)
    \block[yellow](0.9, 4.6) (0.3,0.4)
    \block[green](1.2, 4.6) (0.3,0.4)
    \block[green](1.5, 4.6) (0.3, 0.4)
    \block[green](1.8, 4.6) (0.3,0.4)
      \block[green](2.1, 4.6) (0.3,0.4)
    \block[green](2.4, 4.6) (0.3,0.4)
      \block[green](2.7, 4.6) (0.3,0.4)
    \block[green](3.0, 4.6) (0.3,0.4)
          \block[green](3.3, 4.6) (0.3,0.4)
    \block[green](3.6,  4.6) (0.3,0.4)
    \block[green](3.9,  4.6) (0.3,0.4)
    \block[red](4.2,  4.6) (0.3,0.4)
    \block[green](4.5,  4.6) (0.3,0.4)

  \block[green](0, 4.1) (0.3, 0.4)
\block[green](0.3, 4.1) (0.3,0.4)
    \block[green](0.6, 4.1) (0.3,0.4)
    \block[yellow](0.9, 4.1) (0.3,0.4)
    \block[green](1.2, 4.1) (0.3,0.4)
    \block[red](1.5, 4.1) (0.3, 0.4)
    \block[green](1.8, 4.1) (0.3,0.4)
      \block[green](2.1, 4.1) (0.3,0.4)
    \block[green](2.4, 4.1) (0.3,0.4)
      \block[green](2.7, 4.1) (0.3,0.4)
    \block[green](3.0, 4.1) (0.3,0.4)
          \block[green](3.3, 4.1) (0.3,0.4)
    \block[green](3.6,  4.1) (0.3,0.4)
    \block[green](3.9,  4.1) (0.3,0.4)
    \block[green](4.2,  4.1) (0.3,0.4)
    \block[green](4.5,  4.1) (0.3,0.4)

   \block[green](0, 3.6) (0.3, 0.4)
\block[green](0.3, 3.6) (0.3,0.4)
    \block[green](0.6, 3.6) (0.3,0.4)
    \block[yellow](0.9, 3.6) (0.3,0.4)
    \block[green](1.2, 3.6) (0.3,0.4)
    \block[green](1.5, 3.6) (0.3, 0.4)
    \block[green](1.8, 3.6) (0.3,0.4)
      \block[green](2.1, 3.6) (0.3,0.4)
    \block[red](2.4, 3.6) (0.3,0.4)
      \block[green] (2.7, 3.6) (0.3,0.4)
    \block[green](3.0, 3.6) (0.3,0.4)
          \block[green](3.3, 3.6) (0.3,0.4)
    \block[green](3.6,  3.6) (0.3,0.4)
    \block[green](3.9,  3.6) (0.3,0.4)
    \block[green](4.2,  3.6) (0.3,0.4)
    \block[green](4.5,  3.6) (0.3,0.4)

  \block[green](0, 2.9) (0.3, 0.4)
\block[green](0.3, 2.9) (0.3,0.4)
    \block[green](0.6, 2.9) (0.3,0.4)
    \block[yellow](0.9, 2.9) (0.3,0.4)
    \block[green](1.2, 2.9) (0.3,0.4)
    \block[green](1.5, 2.9) (0.3, 0.4)
    \block[green](1.8, 2.9) (0.3,0.4)
      \block[green](2.1, 2.9) (0.3,0.4)
    \block[green](2.4, 2.9) (0.3,0.4)
      \block[green](2.7, 2.9) (0.3,0.4)
    \block[green](3.0, 2.9) (0.3,0.4)
          \block[green](3.3, 2.9) (0.3,0.4)
    \block[green](3.6,  2.9) (0.3,0.4)
    \block[green](3.9,  2.9) (0.3,0.4)
    \block[green](4.2,  2.9) (0.3,0.4)
    \block[red](4.5,  2.9) (0.3,0.4)

    \block[green](0, 2.4) (0.3,0.4)
    \block[green](0.3, 2.4) (0.3,0.4)
    \block[red](0.6, 2.4) (0.3,0.4)
    \block[yellow](0.9, 2.4) (0.3,0.4)
    \block[green](1.2, 2.4) (0.3,0.4)
    \block[green](1.5, 2.4) (0.3, 0.4)
    \block[green](1.8, 2.4) (0.3,0.4)
      \block[green](2.1, 2.4) (0.3,0.4)
    \block[green](2.4, 2.4) (0.3,0.4)
      \block[green](2.7, 2.4) (0.3,0.4)
    \block[green](3.0, 2.4) (0.3,0.4)
          \block[green](3.3, 2.4) (0.3,0.4)
    \block[green](3.6,  2.4) (0.3,0.4)
    \block[green](3.9,  2.4) (0.3,0.4)
    \block[green](4.2,  2.4) (0.3,0.4)
    \block[green](4.5,  2.4) (0.3,0.4)

  \block[red](0, 1.9) (0.3, 0.4)
    \block[green](0.3, 1.9) (0.3,0.4)
    \block[green](0.6, 1.9) (0.3,0.4)
    \block[yellow](0.9, 1.9) (0.3,0.4)
    \block[green](1.2, 1.9) (0.3,0.4)
    \block[green](1.5, 1.9) (0.3,0.4)
    \block[green](1.8, 1.9) (0.3, 0.4 )
      \block[green](2.1, 1.9 ) (0.3,0.4)
    \block[green](2.4, 1.9) (0.3, 0.4)
      \block[green](2.7, 1.9 ) (0.3, 0.4)
    \block[green](3.0, 1.9 ) (0.3, 0.4)
          \block[green](3.3, 1.9) (0.3, 0.4)
    \block[green](3.6,  1.9) (0.3, 0.4)
    \block[green](3.9,  1.9) (0.3,0.4)
    \block[green](4.2,  1.9) (0.3,0.4)
    \block[green](4.5,  1.9) (0.3, 0.4)

  \end{blockmatrix}
\end{equation}
\noindent The table above pictures the subdivision algorithm deviced in Lemma \ref{Two columns with same rows} for $T_0'$ and $T_1'$. We start with a vertical stripe -- here represented by the colored part of the table. It consists of three subtables, divided by two large horizontal white stripes.
In each of the subtables, we fix the same partition of columns into $t=16$ vertical and three horizontal parts (given numbers are just examples). The new, finer horizontal subdivision is depicted with thin white stripes.  
In each horizontal part, we discard at most one of the subtables -- these are the red squares. The yellow part drawn in the center of the picture is a vertical stripe, consisting of subtables that are not discarded in any of the horizontal parts. \\ 
\vspace{5mm}

The main point is that, for large $k$, we may decrease $s$ or $r$ by subdividing each subtable into $t |G|$ small subtables: columns are divided into $t\gg 0$ parts and rows into $|G|$ parts. In particular, we have $t$ vertical parts, each consisting of $|G|$ small subtables stacked one under another. After quadratic moves, we may assume that each small subtable in almost all of the $t$ vertical parts either has smaller number of dots in each row (decreasing $s$) or smaller number of distinct group elements corresponding to dots (decreasing $r$). As $t$ is always much greater than the number of horizontal subdivisions (which is always some power of $|G|$) we are able to choose a whole vertical stripe (with much smaller number of columns) such that in each subtable $s$ or $r$ has been decreased. Further, we are able to do it in parallel in $T_0'$ and $T_1'$ -- details are in the proof of Lemma \ref{Two columns not containing dots in same row}.\\
\indent We hope this discussion could shed some light on Definition \ref{good table}. We mention here a technical remark: since we work with vertical stripes, once we focus on one subtable, we have to make sure we {\it do not} change the structure of other subtables. This feature is reflected in (ii) of Definition \ref{good table}, where we restrict to quadratic moves that only modify a small part of the table.  
We are finally able to list the main steps towards the proof of Lemma \ref{Two columns not containing dots in same row}:

\begin{enumerate}
\item[(i)] bound the number of dots in each row (Lemma \ref{dots in each row});
\item[(ii)] prove that we may always subdivide a subtable, as described above, decreasing $s$ or $r$ (Lemma \ref{gen coloring});
\item[(iii)] show that the subdivision process can be done in parallel in $T_0$ and $T_1$ (Lemma \ref{Two columns not containing dots in same row}).
\end{enumerate}
\subsection{Proof}\label{subs:proof}
We start from the definition of frequent elements in a given table $T$ with respect to a function $F$. Let $F(G)$ be a function of the cardinality of the group $G$. We assume $F(G)>| G |^2+3|G|$.

\begin{definition}[{\bf Set $F_T$, Dots}]\label{def:dots}

The set of $F(G)$-{\it frequent elements}, or frequent elements, in a given $d\times n$ table $T$ is defined by 

$$
F_{T}=\lbrace h\in G | \mbox{ number of copies of } h \mbox{ in } T > F(G)\cdot d\rbrace. 
$$

\noindent  
Note that if an element is frequent, then there exists a row, where it appears at least $F(G)$ times. The elements $g\in G$ that are not in $F_{T}$ are called {\it dots} $\bullet$. 

\end{definition}

The frequent elements have a key role in allowing quadratic moves in the table. 
Let us start with three basic -- yet useful -- lemmas. 

\begin{lemma}\label{lem:move}

Let $f,f'$ be flows. Let $I$ be a subset of indices and suppose $|I|\geq |G|$. 
There exists a (non-empty) subset $I'\subset I$ such that a quadratic move of $f$ and $f'$ on $I'$ can be performed.

\end{lemma}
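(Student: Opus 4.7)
The plan is to reduce this to the classical pigeonhole argument for zero-sum subsequences in a finite abelian group. Recall from Definition \ref{def:QM} that a quadratic move on a subset $I' \subseteq \{1,\dots,n\}$ of columns is admissible precisely when $\sum_{k\in I'}(f_k - f'_k) = 0$ in $G$. So the task reduces to finding a non-empty $I' \subseteq I$ such that the sequence of differences $(f_k - f'_k)_{k\in I'}$ has zero sum.

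First, I would select any $|G|$ distinct indices $i_1,\dots,i_{|G|} \in I$ (possible since $|I|\geq |G|$) and set $d_k := f_{i_k} - f'_{i_k} \in G$ for $k=1,\dots,|G|$. Then I would consider the $|G|+1$ partial sums
\[
s_0 := 0, \quad s_j := \sum_{k=1}^{j} d_k \quad (j=1,\dots,|G|).
\]
Since these $|G|+1$ elements lie in $G$, the pigeonhole principle yields indices $0 \leq a < b \leq |G|$ with $s_a = s_b$, hence $\sum_{k=a+1}^{b} d_k = 0$.

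Setting $I' := \{i_{a+1},\dots,i_b\}$ gives a non-empty subset of $I$ (since $b>a$) with $\sum_{k\in I'}(f_k-f'_k)=0$. By Definition \ref{def:QM}, swapping the entries of $f$ and $f'$ on the index set $I'$ is then a valid quadratic move, and the resulting rows are again flows because $f$ and $f'$ are flows and the differences over $I'$ cancel.

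There is essentially no obstacle here; the only thing to notice is that $|G|$ is exactly the right size to produce $|G|+1$ partial sums, which matches the hypothesis $|I|\geq |G|$ with no slack. This is a Davenport-constant-style argument in its simplest form, and it will be invoked repeatedly in what follows to guarantee that sufficiently wide portions of rows always admit some quadratic move.
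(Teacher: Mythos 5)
Your proof is correct and is essentially the paper's own argument: the paper simply asserts that among $|G|$ differences $f_i-f'_i$ one can find a non-empty zero-sum subset, and the prefix-sum pigeonhole argument you give is the standard justification of exactly that claim. Nothing is missing.
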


\begin{proof}

Since we have $|G|$ differences, possibly repeated, of the form $f_i-f'_i$ for $i\in I$, we may find a non-empty subset $I'$, such that $\sum_{i\in I'}(f_i-f'_i)=0\in G$.
\end{proof}

\begin{lemma}\label{dots in each row}

Let $T$ be a given table of elements of $G$, then we may assume that each row in $T$ has at most $|G|(F(G)+1)$ dots. 

\begin{proof}

Note that there exists a row containing at most $|G|F(G)$ dots. Assuming the contrary, we would have  at least $(|G|F(G)+1)d$ dots in $T$. This would imply that there would be a dot in $F_T$ -- a contradiction. Let us consider a row $r_{\max}$ with the largest number of dots. If $r_{\max}$ contains at most $|G|(F(G)+1)$ dots, this finishes the proof. Otherwise, 
we pick a row $r_{\min}$ with the smallest number of dots; they are at most $|G|F(G)$. Now, there exist $|G|$ dots of $r_{\max}$ in the same columns as $|G|$ elements of $r_{\min}$ which are in $F_T$. Exchanging a subset of them we decrease the number of rows with the largest number of dots. 
Repeating the process, we obtain $T$ with all rows with at most $|G|(F(G)+1)$ dots. 
\end{proof}
\end{lemma}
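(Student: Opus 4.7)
My plan is to combine a pigeonhole bound on the total number of dots in $T$ with an iterated two-row exchange driven by Lemma~\ref{lem:move}, shrinking the dot count of the heaviest row at each step.

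First I would establish a global dot budget. By Definition~\ref{def:dots}, every $g \notin F_T$ appears at most $F(G)\cdot d$ times in $T$, so summing over the at most $|G|$ non-frequent elements, the total number of dot entries in $T$ is at most $|G|F(G)\cdot d$. Averaging across the $d$ rows, some row $r_{\min}$ contains at most $|G|F(G)$ dots. Let $r_{\max}$ be a row with the maximum number of dots; if $r_{\max}$ already has at most $|G|(F(G)+1)$ dots, we are done. Otherwise $r_{\max}$ has strictly more than $|G|(F(G)+1)$ dots, so comparing column by column there are at least $|G|(F(G)+1) - |G|F(G) = |G|$ columns $I$ where $r_{\max}$ has a dot while $r_{\min}$ carries an element of $F_T$. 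Applying Lemma~\ref{lem:move} to the two flows $r_{\max}, r_{\min}$ restricted to $I$, I obtain a nonempty $I' \subseteq I$ along which a quadratic move is legal. Performing the move, the dot count of $r_{\max}$ strictly decreases by $|I'|\geq 1$, while the dot count of $r_{\min}$ grows by at most $|I'|\leq |G|$, ending with at most $|G|F(G)+|G|=|G|(F(G)+1)$ dots.

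To finish I would iterate and verify termination via a lexicographic invariant. Let $\mu(T)=(M,k)$ where $M$ is the maximum dot count among rows of $T$ and $k$ is the number of rows attaining $M$. After one exchange the former $r_{\max}$ falls strictly below $M$, the former $r_{\min}$ sits at most at $|G|(F(G)+1) < M$, and no other row changes; thus $\mu$ strictly decreases in the lexicographic order on $\NN\times\NN$. Since this set is well-ordered and the procedure can only stop when every row has at most $|G|(F(G)+1)$ dots, the lemma follows. The only delicate point is making sure the exchange never pushes a new row past the threshold, and this is precisely guaranteed by the sharp bound $|I'|\leq|G|$ coming from Lemma~\ref{lem:move}; that same bound is what keeps the invariant $\mu$ monotone and makes the induction go through.
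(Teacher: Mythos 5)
Your proof is correct and follows essentially the same route as the paper's: pigeonhole on the global dot count to find a row with at most $|G|F(G)$ dots, swap some of the $\geq |G|$ columns where $r_{\max}$ has a dot and $r_{\min}$ has a frequent element via Lemma~\ref{lem:move}, and iterate. The only nuance is that to guarantee $|I'|\leq |G|$ you should apply Lemma~\ref{lem:move} to a subset of $I$ of size exactly $|G|$ (as the paper implicitly does); otherwise your lexicographic termination argument just makes precise the paper's one-line claim that the number of rows attaining the maximum decreases.
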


\begin{lemma}\label{Counting of zeros}

Let $z\in \mathbb N$. For any $\epsilon > 0$, there exists $n=n(z)$ such that in any $(0,1)$-table $T$ of size $d\times n$, whose columns contain at least $\epsilon \cdot d$ zeros each, there exists a row with at least $z$ zeros. 

\begin{proof}
Setting $n >z/\epsilon$ we may conclude by double counting zeros column and row-wise. 
\end{proof}
\end{lemma}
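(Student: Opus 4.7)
The plan is to argue by a direct double counting of the zeros in the table $T$, turning the hypothesis into a lower bound on the total number of zeros and contradicting an upper bound coming from the assumption that every row contains fewer than $z$ zeros. This is the natural pigeonhole-type argument, and the only task is to make $n(z)$ explicit.

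First I would choose $n = n(z) := \lceil z/\epsilon \rceil + 1$ (or any integer strictly larger than $z/\epsilon$), so that $\epsilon n > z$. I would then assume, for contradiction, that every row of $T$ contains at most $z-1$ zeros; summing over the $d$ rows, the total number of zeros in $T$ is at most $(z-1)d < zd$. On the other hand, the column hypothesis gives that each of the $n$ columns contributes at least $\epsilon d$ zeros, so the total number of zeros is at least $\epsilon d n > z d$ by the choice of $n$. These two estimates are incompatible, so some row must contain at least $z$ zeros, as required.

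There is no real obstacle here: the entire content is the inequality $\epsilon n > z$, i.e.\ the threshold $n > z/\epsilon$ claimed by the authors. The only thing worth being careful about is the convention on strict versus non-strict inequalities in the statement (``at least $z$ zeros'' versus ``at least $\epsilon d$ zeros''), which just affects whether one writes $n > z/\epsilon$ or $n \geq z/\epsilon$; in either case the same double-counting identity
\[
\epsilon\, d\, n \;\leq\; \#\{\text{zeros in } T\} \;\leq\; (\max_r \#\{\text{zeros in row } r\})\cdot d
\]
closes the argument immediately, and one reads off the bound $\max_r \#\{\text{zeros in row } r\} \geq \epsilon n$, which exceeds $z$ for $n$ large enough.
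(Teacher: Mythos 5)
Your argument is correct and is exactly the double-counting proof the paper gives in one line: choosing $n>z/\epsilon$, comparing the column-wise lower bound $\epsilon d n$ with the row-wise upper bound $(z-1)d$ yields the contradiction. Nothing is missing; you have simply made the authors' one-sentence proof explicit.
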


\begin{remark}\label{most freq. el.}

Let $T$ be a $d\times n$ table whose entries are elements of $G$. 
In each column $c_i$, we select the elements that appear a maximal number of times; these elements are the {\it most frequent elements} in $c_i$. Among all the columns, we select those where a {\it reference} element $g\in G$ appears as one of the most frequent elements. \\
\indent This is not a severe restriction, as $n$ is very large and we would restrict to a subtable with at least $n/|G|$ columns, for some $g\in G$. Such a reference element $g$ will be important throughout the proof. 
\end{remark}
We now introduce a crucial property $S(\cdot)$ for our inductive proof.
\begin{definition}[{\bf Property $S(\cdot)$}]\label{good table}

Let $s,r,t,k\in \mathbb N$, let  $T$ be a $d\times n$ table whose entries are elements of $G$, and $Q$ a $d' \times k$-subtable of $T$. Moreover, let us assume that the following holds:

\begin{itemize}

\item[(a)] $g\in G$ is one of the most frequent elements in every column of $T$;

\item[(b)] there are at most $s$ dots in every row of $Q$;

\item[(c)] there exists a subset $H\subset G$ of cardinality $r$, such that each dot of $Q$ belongs to $H$.

\end{itemize}
 
\noindent We say that the property $S(s,r,t,k,T,Q)$ holds for the pair $Q\subset T$ if

\begin{enumerate}

\item[(i)] $s=1$ and $k\geq 2$, or 

\item[(ii)] $s>1$ and we can transform $T$ into another table $\tilde T$ (transforming $Q$ into $\tilde Q$) such that:

we may subdivide the first $t\cdot \left \lfloor{k/t}\right \rfloor$ columns of $\tilde Q$ 
into $t$ consecutive  subtables $Q_i$, each consisting of $\tilde k = \left \lfloor{k/t}\right \rfloor$ columns and $d'$ rows that satisfy:
\subitem (1) $r=1$: Each of the $Q_i$'s {\it except one} has the property $S(s-1,r, | G |t , \tilde k, T, Q_i)$. 
\subitem (2) $r>1$: For every $Q_i$ {\it except one} we can subdivide the rows into $|G|$ parts $Q_{ij}$, such that for every $j$ either $S(s-1,r, | G |t , \tilde k, T, Q_{ij})$ or $S(s, r-1, | G |t, \tilde k, T, Q_{ij})$ holds.  
Further, the transformation may only use quadratic moves that do not change dots that are in the columns of $Q$ and in rows outside $Q$ (i.e.~it cannot move dots in the same vertical stripe, but outside $Q$).
\end{enumerate}
\begin{remark}
Condition (a) in Definition \ref{good table} is not restrictive, according to Remark \ref{most freq. el.}, as we will be applying the definition to subtables of $T_0$ and $T_1$ for which $g$ is one of the most frequent elements in each column.
\end{remark}

\end{definition}
In the next Lemma, we show that one can transform and divide $Q$ into smaller subtables decreasing either $s$ or $r$, provided $k$ is sufficiently large. This is achieved with special quadratic moves.

\begin{lemma}\label{gen coloring}

For every $s,r,t\in \mathbb N$, for every $k$ sufficiently large, for every pair $Q\subset T$ satisfying the assumptions in Definition \ref{good table}, the property $S(s,r,t,k,T,Q)$ holds.

\begin{proof}

The proof is by induction on $s$. For $s=1$ the claim is true for $k\geq 2$  by Definition \ref{good table}. Let us assume that the claim is true for $s$. We show the statement for $s+1$. \\
\indent If $s+1 \geq | G |$, let us set $k > t\cdot \tilde k$, where $\tilde k$ is an integer such that the property $S(s, r, | G | t, \tilde k, \cdot , \cdot)$ holds for arbitrary pairs of tables and subtables in the last two arguments (satisfying assumptions in Definition \ref{good table}). Let us fix an arbitrary pair of tables $Q\subset T$ satisfying assumptions in Definition \ref{good table}. By Definition \ref{good table}, 
each row of $Q$ has at most $s+1$ dots. We fix a partition of $Q$ into equal-sized subtables $Q_j$, each consisting of $\lfloor k/t\rfloor$ consecutive columns.
If all $Q_j$s contain only rows with strictly less than $s+1$ dots, we are done. Otherwise, we choose a subtable $Q_{i_0}$ with a maximal 
number of rows containing $s+1$ dots. 
Every $Q_j$ has at most as many rows with $s+1$ dots as $Q_{i_0}$. Hence, for any subtable $Q_j$  different from $Q_{i_0}$ we can pair each row of $Q_j$ with  $s+1$ dots with a row of $Q_j$ without any dots (the latter corresponding to a row of $Q_{i_0}$ with $s+1$ dots). The structure of $T$ is as follows:

\[
\begin{tikzpicture}[mymatrixenv]
\matrix[mymatrix] (m)  {
 \bullet & \bullet & \bullet & \ldots & \ldots & \ldots & \ldots & \ldots & \ldots & \ldots & \ldots & \ldots \\
  &  &  &  &  &  &  &  &   &   &  &  \\
\bullet & \bullet & \bullet & \ldots & \ldots  & \ldots & \ldots & \ldots & \ldots & \ldots & \ldots & \ldots\\
\ldots & \ldots & \ldots & \bullet & \bullet & \bullet  & \ldots & \ldots & \ldots & \ldots & \ldots & \ldots\\
\ldots & \ldots & \ldots & \bullet & \bullet & \bullet   & \ldots & \ldots & \ldots & \ldots & \ldots & \ldots\\
\vspace{1mm}\\
\ldots & \ldots & \ldots & \ldots & \ldots & \ldots  & \ldots & \ldots & \ldots & \ldots & \ldots & \ldots \\
\vspace{1mm}\\
\ldots & \ldots & \ldots & \ldots & \ldots & \ldots  & \ldots & \ldots & \ldots & \ldots & \ldots & \ldots \\
\vspace{1mm}\\
\bullet & \bullet & \bullet & \ldots & \ldots & \ldots  & \ldots & \ldots & \ldots & \ldots & \ldots & \ldots \\
    };

    \mymatrixbraceright{1}{5}{$Q$}
    \mymatrixbraceright{6}{11}{$T\setminus Q$}
    \mymatrixbracebottom{1}{3}{$Q_{i_0}$}
    \mymatrixbracebottom{4}{6}{$Q_j$}
    \mymatrixbracebottom{7}{9}{$Q\setminus (Q_{i_0} \cup Q_j)$}
    \mymatrixbracebottom{10}{12}{$T\setminus Q$}
\end{tikzpicture}
\]

\noindent The arrows below describe the pairing between a row with  $s+1$ dots with a row without any dots in the subtable $Q_j$. \\

$$
Q_j=\begin{gmatrix}[b]
\ldots & \ldots & \ldots & \ldots & \ldots & \ldots \\
\ldots & \ldots & \ldots & \ldots & \ldots & \ldots\\
\ldots & \ldots & \ldots & \ldots & \ldots & \ldots\\
\ldots & \ldots & \ldots & \ldots & \ldots & \ldots\\
\bullet & \ldots &\bullet & \bullet & \ldots & \ldots\\
\bullet &\bullet &\ldots  & \ldots  & \bullet & \ldots \\ 
 \rowops
 \swap{3}{4}
 \swap{2}{5}
\end{gmatrix}
$$

\indent For each such pair, we make a quadratic move reducing the number of dots that a row of $Q_j$ may have.  Hence, by induction, for any $Q_j\neq Q_{i_0}$ the property $S(s,r,| G |t, \left \lfloor{k/t}\right \rfloor, T, Q_j)$ holds, as $k > t \cdot \tilde{k}$. Thus $S(s+1,r, t, k, T, Q)$ holds by Definition \ref{good table}. 

If $s+1< | G | $, we proceed by induction on $r$. 

If $r=1$, let us set $k > t\cdot \tilde k$ as before. First, suppose there is only one vertical part $Q_{i_0}$ which contains rows with $s+1$ dots. Since all the other parts $Q_j$'s have rows with at most $s$ dots, by induction they satisfy 
$S(s,r,| G | t, \left \lfloor{k/t}\right \rfloor, T, Q_j)$, hence we may conclude in this case. Otherwise, as long as there are two parts $Q_{i_0}$ and $Q_{j_0}$ with rows $r_i$ and $r_j$ respectively with $s+1$ dots, we proceed as follows. Let us fix one dot in $r_i$ and one in $r_j$. Let $g_i$ and $g_j$ be the elements of the rows $r_i$ and $r_j$ in the same columns as the chosen dots. 

\[
\begin{tikzpicture}[mymatrixenv]
\matrix[mymatrix] (m)  {
\bullet & \bullet & \bullet &\bullet & \ldots & \ldots & \ldots &\ldots &\ldots &\ldots & \ldots & \ldots &\ldots &\ldots & \ldots \\
\bullet & \bullet & \bullet & \color{red}{\bullet} & g_i & \ldots & \ldots &\ldots &g_j  &\ldots & \ldots & g_j & g_j & \ldots & \ldots \\
\ldots & \ldots  & \ldots & \ldots  & \ldots & \ldots & \ldots &\ldots  &\ldots &\ldots & \ldots &\ldots &\ldots & \ldots  &\ldots \\
\ldots & \ldots & \ldots & g_j & \color{red}{\bullet} & \bullet & \bullet & \bullet &\ldots &\ldots & g_i & \ldots & \ldots & g_i & g_i \\
\ldots & \ldots & \ldots & \ldots & \ldots & \ldots & \ldots & \ldots & \ldots &\ldots & \ldots &\ldots &\ldots & \ldots &\ldots\\
\vspace{1mm} \\
\ldots & \ldots & \ldots & \ldots & \ldots & \ldots & \ldots & \ldots & \ldots &\ldots & \ldots &\ldots &\ldots & \ldots &\ldots\\
\vspace{1mm} \\
\ldots & \ldots & \ldots & \ldots & \ldots & \ldots & \ldots & \ldots & \ldots &\ldots & \ldots &\ldots &\ldots & \ldots &\ldots\\
    };
    \mymatrixbraceright{1}{4}{$Q$}
    \mymatrixbraceright{5}{9}{$T\setminus Q$}
    \mymatrixbracebottom{1}{4}{$Q_{i_0}$}
    \mymatrixbracebottom{5}{8}{$Q_{j_0}$}
    \mymatrixbracebottom{9}{11}{$Q\setminus (Q_{i_0}\cup Q_{j_0})$}
    \mymatrixbracebottom{12}{15}{$T\setminus Q$}
\end{tikzpicture}
\] 

\noindent As $g_i$ is not a dot, there has to exist a row $r_t$ of $T$ with more than $F(G)$ copies of $g_i$. We make a quadratic move between $r_j$ and $r_t$ not involving the $2s+2$ columns of dots in $Q_{i_0}$ and $Q_{j_0}$ in the rows $r_i$ and $r_j$. This procedure allows us to put at least $F(G)-3 | G |$ copies of $g_i$ in the row $r_j$, without moving dots in $Q$ -- we need to subtract $|G|$ by Lemma \ref{lem:move} and $2|G|>2(s+1)$ to avoid the dots. Now, we can make the same quadratic move for $g_j$ and $r_i$. The result of these moves is in the table above, where the red bullets $\color{red}{\bullet}$ are the chosen dots. \\
\indent After performing these quadratic moves, if there is a column $c_t$ containing $g_j$ and $g_i$ in rows $r_i$ and $r_j$, then we make a quadratic move, exchanging the chosen dots and the elements of $c_t$. Otherwise, applying Lemma \ref{Counting of zeros} for $\epsilon=1/|G|$ to a subtable of $T$ of columns containing $g_i$ in the row $r_j$, we may find a row $r_t$ containing at least $ | G |$ copies of $g$, as long as $F(G)-3|G|>|G|^2$. Then we move some copies of $g$ to the row $r_i$ by Lemma \ref{lem:move}. Analogously for $g_j$, we may move some copies of $g$ to the row $r_j$. Here are depicted in red the copies of $g$ and in blue the quadratic move putting those copies of $g$ in $r_i$ and $r_j$ respectively.\\

\[
\begin{blockarray}{ccccccccccc}
 &  &  &  &  & & & & & & \\
\begin{block}{[cccccccccc] c}
\ldots & \ldots & \ldots & \ldots & \ldots & \ldots & {\color{red} g} & {\color{red} g} & {\color{red} g} & {\color{red} g}  & r_t\\
\ldots & \ldots & \ldots & \ldots & \ldots & \ldots & \color{blue}{\downarrow} & \color{blue}{\downarrow} & \color{blue}{\downarrow} & \color{blue}{\downarrow} &  \\
{\color{red} \bullet} & g_i & g_j  & g_j & \ldots & g_j & \ldots & \ldots & \ldots & \ldots & r_i\\
\ldots & \ldots & \ldots & \ldots & \ldots & \ldots & \ldots & \ldots & \ldots & \ldots &  \\
g_j & {\color{red} \bullet} & \ldots & \ldots & \ldots & \ldots & g_i & g_i & \ldots  & g_i  & r_j\\
\ldots & \ldots & \color{blue}{\uparrow} & \color{blue}{\uparrow} & \color{blue}{\uparrow} & \color{blue}{\uparrow} & \ldots & \ldots & \ldots & \ldots &  \\
\ldots & \ldots & {\color{red} g} & {\color{red} g} & {\color{red} g} & {\color{red} g} & \ldots & \ldots & \ldots & \ldots & r_{t'}\\
\end{block}
\end{blockarray}
\]

\noindent Applying the blue quadratic move above, we obtain a column $c_i$ that has $g$ in $r_i$ and $g_i$ in $r_j$. In the same way, we obtain a column $c_j$ that has $g$ in $r_j$ and $g_j$ in $r_i$. Now, we perform a quadratic move in the subtable below, exchanging the chosen dots:

$$
\begin{bmatrix}
\color{red}{\bullet} & g_i & g_j & g  \\
g_j & \color{red}{\bullet} &  g & g_i \\
\end{bmatrix}.
$$

Thus, we reduce the number of dots in both rows. This concludes the case $r=1$. \\
\indent  Assume $r > 1$. Let us set $k > t\cdot \tilde k$, where $\tilde k$ is such that both of the properties $S(s,r,| G |t, k, T, \cdot)$ and 
$S(s+1,r-1,| G |t, k, T, \cdot)$ hold. Suppose that there is only one $Q_{i_0}$ such that there exists a row with $s+1$ dots corresponding to $r$ distinct group elements. Then the rows of every other part $Q_j$ can be partitioned into at most $| G |$ parts $Q_{j,l}$, such that 

\begin{itemize}

\item[(i)] all the rows  in $Q_{j,1}$  have at most $s$ dots;

\item[(ii)] all the dots in $Q_{j,l}$ for $l > 1$ correspond to at most $r-1$ distinct group elements. 

\end{itemize}

We conclude by induction in the case when there is only one part $Q_{i_0}$. We will reduce every other case to this one. Assume that there are two parts $Q_{i_0}$ and $Q_{j_0}$ such that there exist rows $r_i$ and $r_j$ with $s+1$ dots corresponding to $r$ distinct group elements. 
As both rows $r_i$ and $r_j$ contain dots corresponding to the same $r$ elements of the group $G$, we can choose one dot in each row corresponding to the same element. Now, repeating the procedure described in the case $r=1$, we reduce the number of dots in $r_i$ and $r_j$. This concludes the proof. 
\end{proof}

\end{lemma}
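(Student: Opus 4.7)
The plan is to proceed by induction on $s$, with a nested induction on $r$ in the subcase $s + 1 < |G|$, using the lexicographic order on $(s, r)$. The base case $s = 1$ is exactly clause (i) of Definition \ref{good table} and holds for any $k \geq 2$. For the inductive step I would fix $k$ large enough that $\tilde k := \lfloor k/t \rfloor$ exceeds the threshold guaranteed by the inductive hypotheses for $S(s-1, r, |G|t, \cdot, \cdot, \cdot)$ and (when $r > 1$) $S(s, r-1, |G|t, \cdot, \cdot, \cdot)$, and partition the first $t \tilde k$ columns of $Q$ into $t$ consecutive blocks $Q_1, \dots, Q_t$ of $\tilde k$ columns each.

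The heart of the argument is to single out a \emph{sacrificial} block $Q_{i_0}$ and perform quadratic moves so that every other block $Q_j$ has strictly smaller complexity (fewer dots per row, or dots drawn from fewer group elements), so that the inductive hypothesis gives $S(s-1, r, |G|t, \tilde k, T, Q_j)$ or $S(s, r-1, |G|t, \tilde k, T, Q_j)$. The natural choice of $Q_{i_0}$ is the block containing the maximum number of rows with $s+1$ dots (or with dots realising all $r$ elements of $H$). The key observation is that a row containing $s+1$ dots inside $Q_j$ necessarily has no dots inside any other block, because clause (b) of Definition \ref{good table} bounds the total by $s+1$; this means that within any $Q_j \neq Q_{i_0}$ we can pair each \emph{bad} row of $Q_j$ with a \emph{clean} row (one whose dots all sit inside $Q_{i_0}$), then perform a local quadratic move inside $Q_j$ swapping a dot for a non-dot. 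That this move is available is exactly Lemma \ref{lem:move} applied to $\tilde k \gg |G|$.

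For the delicate subcase $s+1 < |G|$ the local swap inside $Q_j$ may not suffice, and I would follow the familiar template: if two distinct blocks $Q_{i_0}$ and $Q_{j_0}$ both host bad rows $r_i, r_j$, pick one dot in each (corresponding, when $r > 1$, to the same group element so that the problem reduces to the $r = 1$ situation). Using the frequency assumption together with Lemma \ref{Counting of zeros} applied with $\epsilon = 1/|G|$, I would locate auxiliary rows of $T$ rich in the reference element $g$, and use Lemma \ref{lem:move} to transfer copies of $g$ into $r_i$ and $r_j$ away from the dot-columns. This creates the configuration needed for a final $2 \times 4$ quadratic move that simultaneously eliminates a dot in each of $r_i, r_j$ --- precisely the template $\begin{bmatrix} \bullet & g_i & g_j & g \\ g_j & \bullet & g & g_i \end{bmatrix}$. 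Repeating until at most one block is bad gives the desired partition.

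The main obstacle is bookkeeping rather than strategy: one must verify that the counting condition $F(G) > |G|^2 + 3|G|$ is enough to absorb the losses in Lemmas \ref{lem:move} and \ref{Counting of zeros} (we must subtract $|G|$ to force a vanishing difference and $2|G|$ more to dodge the $2(s+1)$ forbidden dot-columns), and, crucially, that all these quadratic moves respect clause (ii) of Definition \ref{good table}, namely that no dot outside $Q$ but inside the same vertical stripe is disturbed. This is guaranteed by restricting each move to rows whose entries outside $Q$ are not dots, which is possible precisely because $g$ is one of the most frequent elements in every column of $T$ by hypothesis (a). Threading this invariant through all branches of the induction is what makes the argument technical.
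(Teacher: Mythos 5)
Your proposal follows the paper's proof essentially step for step: induction on $s$ with a nested induction on $r$ when $s+1<|G|$, partition of $Q$ into $t$ blocks with one sacrificial block $Q_{i_0}$, pairing of bad rows with clean rows, and in the delicate case the $g_i,g_j,g$ shuffle (via Lemma \ref{Counting of zeros} with $\epsilon=1/|G|$) culminating in the same $2\times 4$ quadratic move. One small correction: the local swap in the case $s+1\ge|G|$ is available because Lemma \ref{lem:move} is applied to the set of $s+1\ge|G|$ dot-columns of the bad row, not because $\tilde k\gg|G|$ --- if the latter were the reason, the swap would also be available when $s+1<|G|$, contradicting your own case division.
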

By Lemma \ref{gen coloring} for any $s,r,t$ we set $K(s,r,t)$ such that for all $k\geq K(s,r,t)$ the property $S(s,r,t,k,\cdot,\cdot)$ holds.

\begin{lemma}\label{Two columns not containing dots in same row}

Let $T_0$ and $T_1$ be two compatible tables with at least $|G|K(|G|(F(G)+1),|G|,3)$ columns. Then, we may transform them using quadratic moves into two tables $\tilde T_0$ and $\tilde T_1$ such that the following holds: 
there exists $j$ such that no row in $\tilde T_0$ nor in $\tilde T_1$ has a dot in both the $j$-th and the $(j+1)$-st columns.  

\begin{proof}

Let us restrict $T_0$ and $T_1$ to the subtables $T'_0$ and $T'_1$ containing all rows and those columns that have $g$ as the most frequent element. By Lemma \ref{dots in each row}, we may assume that the upper bound on the number of dots in $T'_0$ and $T'_1$ in each row is $B=|G|(F(G)+1)$. By Remark \ref{most freq. el.} and the assumption on the size of $T_0$ and $T_1$, we can assume that $T'_0$ and $T'_1$ have at least $k_0=K(B,|G|,3)$ columns. Hence, in particular, the properties $S(B, | G |, 3, k_0, T_0', T'_0)$ and $S(B, | G |, 3, k_0, T_1', T'_1)$ hold. In the rest of the proof we transform both tables $T_0'$ and $T_1'$ using quadratic moves, at each step passing to a smaller vertical stripe such that each subtable in it satisfies the property $S(\cdot)$ with smaller and smaller $s$ or $r$. \\
\indent We apply the following algorithm, which is depicted in Figure (\ref{fig:em}). The starting point of the $i$-th step of the algorithm are two compatible tables with corresponding distinguished $ k_i=\lfloor{ k_{i-1}/(3|G|^{i-1})}\rfloor$ consecutive columns forming a vertical stripe. In the $i$-th step, the vertical stripe has at most $|G|^i$ parts (subtables). In the table $T'_0$ the parts are $T'_{0,j}$. For a given part $T'_{0,j}$, let $s_{0,i,j}$ be the maximal number of dots that a row may have. Let $r_{0,i,j}$ be the number of distinct group elements correspoding to dots of $T'_{0,j}$. Then $S(s_{0,i,j},r_{0,i,j}, 3 |G|^i, k_i, T'_0, T'_{0,j})$ holds. Likewise the parts $T'_{1,j}$ of $T'_1$ satisfy $S(s_{1,i,j},r_{1,i,j}, 3 |G|^i, k_i, T'_1, T'_{1,j})$. Let us subdivide the $k_i$ columns into $3 |G|^i$ parts, subdividing each $T'_{0,j}$ into parts $T'_{0,j,a}$, as in Definition \ref{good table}. 
Now, the algorithm transforms $T'_{0,j,a}$ using Definition \ref{good table}. Hence, we obtain a subdivision of rows of $T'_{0,j,a}$ into at most $| G |$ parts $T'_{0,j,a,b}$. Here are the parts of $T'_0$ highlighted in blue, where the left and right brackets select horizontal parts and the bottom bracket selects vertical parts:

\[
\begin{tikzpicture}[mymatrixenv]
\matrix[mymatrix] (m)  {
\ldots & \ldots & \ldots  & \ldots  & \ldots  \\
\ldots & \ldots & \ldots  & \ldots  & \ldots \\
\ldots & \ldots & \ldots  & \ldots  & \ldots \\
\ldots & \ldots & \ldots  & \ldots  & \ldots \\
\ldots & \ldots & \ldots  & \ldots  & \ldots \\
\ldots & \ldots & \ldots  & \ldots  & \ldots \\
\ldots & \ldots & \ldots  & \ldots  & \ldots \\
\ldots & \ldots & \ldots  & \ldots  & \ldots \\
\ldots & \ldots & \ldots  & \ldots  & \ldots \\
\ldots & \ldots & \ldots  & \ldots  & \ldots \\
\ldots & \ldots & \ldots  & \ldots  & \ldots \\
};

    \mymatrixbraceright{1}{7}{$\color{blue}{T'_{0,j}}$}
    \mymatrixbraceleft{1}{3}{$\color{blue}{T'_{0,j,a,b}}$}
    \mymatrixbracebottom{1}{2}{$\color{blue}{T'_{0,j,a}}$}
    
\end{tikzpicture}
\]

For each $j$, for every $a$ {\it except one} and for every $b$ the subtable $T'_{0,j,a,b}$ satisfies either the property $S(s_{0,i,j}-1, r_{0,i,j}, 3|G|^{i+1},k_{i+1}, T'_0, T'_{0,j,a,b})$ or $S(s_{0,i,j}, r_{0,i,j}-1, 3|G|^{i+1},k_{i+1}, T'_0, T'_{0,j,a,b})$. As each of the $|G|^{i}$ horizontal parts in $T_0'$ can exclude one $T'_{0,j,a}$, and each of the $|G|^{i}$ horizontal parts in $T_1'$ can exclude one $T'_{1,j,a}$ we may find an index $a_0$ such that: 
\begin{enumerate}
\item[(i)] $S(s_{0,i,j}-1, r_{0,i,j}, 3|G|^{i+1},k_{i+1}, T'_0, T'_{0,j,a_0,b})$ or $S(s_{0,i,j}, r_{0,i,j}-1, 3|G|^{i+1},k_{i+1}, T'_0, T'_{0,j,a_0,b})$ 
\end{enumerate}
\indent and
\begin{enumerate}
\item[(ii)] $S(s_{1,i,j}-1, r_{1,i,j}, 3|G|^{i+1},k_{i+1}, T'_1, T'_{1,j,a_0,b})$ or $S(s_{1,i,j}, r_{1,i,j}-1, 3|G|^{i+1},k_{i+1}, T'_1, T'_{1,j,a_0,b})$ 
\end{enumerate}

hold for every $j$ and every $b$. (Less formally, since the number of vertical stripes is much larger than the number of discarded subtables in each subdivision, we can choose two corresponding vertical stripes in both of the tables. This is pictured in Figure (\ref{fig: em2}).) The choice of the $a_0$-th vertical stripe and the subdivisions $T'_{0,j,a_0,b}$, $T'_{1,j,a_0,b}$ are the output of the $i$-th step of the algorithm and the input of the $(i+1)$-th step. The algorithm terminates when we reach $s=1$. The procedure terminates in a finite number of steps as at each step either $s$ or $r$ decreases. Moreover, at every step of the algorithm, we have  collections of subtables satisfying property $S(\cdot)$. This implies that, at the last step, $k\geq 2$. Thus the algorithm provides the desired pairs of columns.

\begin{equation}\label{fig: em2}
\tilde{T_0}-\tilde{T_1}= \begin{tiny} \begin{blockmatrix}
\block[white] (-4.5, 1.9) (4.5, 4.8)
\block[white] (4.8, 1.9) (4.5, 4.8)
\block[green](0, 6.3) (0.3, 0.4)
\block[green](0.3, 6.3) (0.3,0.4)
    \block[green](0.6, 6.3) (0.3,0.4)
    \block[yellow](0.9, 6.3) (0.3,0.4)
    \block[red](1.2, 6.3) (0.3,0.4)
    \block[green](1.5, 6.3) (0.3, 0.4)
    \block[green](1.8, 6.3) (0.3,0.4)
      \block[green](2.1, 6.3) (0.3,0.4)
    \block[green](2.4, 6.3) (0.3,0.4)
      \block[green](2.7, 6.3) (0.3,0.4)
    \block[green](3.0, 6.3) (0.3,0.4)
          \block[green](3.3, 6.3) (0.3,0.4)
    \block[green](3.6,  6.3) (0.3,0.4)
    \block[green](3.9,  6.3) (0.3,0.4)
    \block[green](4.2,  6.3) (0.3,0.4)
    \block[green](4.5,  6.3) (0.3,0.4)

   \block[green](0, 5.8) (0.3, 0.4)
\block[green](0.3, 5.8) (0.3,0.4)
    \block[green](0.6, 5.8) (0.3,0.4)
    \block[yellow](0.9, 5.8) (0.3,0.4)
    \block[green](1.2, 5.8) (0.3,0.4)
    \block[green](1.5, 5.8) (0.3, 0.4)
    \block[green](1.8, 5.8) (0.3,0.4)
      \block[green](2.1, 5.8) (0.3,0.4)
    \block[green](2.4, 5.8) (0.3,0.4)
      \block[green](2.7, 5.8) (0.3,0.4)
    \block[green](3.0, 5.8) (0.3,0.4)
          \block[green](3.3, 5.8) (0.3,0.4)
    \block[red](3.6,  5.8) (0.3,0.4)
    \block[green](3.9,  5.8) (0.3,0.4)
    \block[green](4.2,  5.8) (0.3,0.4)
    \block[green](4.5,  5.8) (0.3,0.4)

  \block[green](0, 5.3) (0.3, 0.4)
\block[green](0.3, 5.3) (0.3,0.4)
    \block[green](0.6, 5.3) (0.3,0.4)
    \block[yellow](0.9, 5.3) (0.3,0.4)
    \block[green](1.2, 5.3) (0.3,0.4)
    \block[green](1.5, 5.3) (0.3, 0.4)
    \block[green](1.8, 5.3) (0.3,0.4)
      \block[green](2.1, 5.3) (0.3,0.4)
    \block[green](2.4, 5.3) (0.3,0.4)
      \block[green](2.7, 5.3) (0.3,0.4)
    \block[red](3.0, 5.3) (0.3,0.4)
          \block[green](3.3, 5.3) (0.3,0.4)
    \block[green](3.6,  5.3) (0.3,0.4)
    \block[green](3.9,  5.3) (0.3,0.4)
    \block[green](4.2,  5.3) (0.3,0.4)
    \block[green](4.5,  5.3) (0.3,0.4)

  \block[green](0, 4.6) (0.3, 0.4)
\block[green](0.3, 4.6) (0.3,0.4)
    \block[green](0.6, 4.6) (0.3,0.4)
    \block[yellow](0.9, 4.6) (0.3,0.4)
    \block[green](1.2, 4.6) (0.3,0.4)
    \block[green](1.5, 4.6) (0.3, 0.4)
    \block[green](1.8, 4.6) (0.3,0.4)
      \block[green](2.1, 4.6) (0.3,0.4)
    \block[green](2.4, 4.6) (0.3,0.4)
      \block[green](2.7, 4.6) (0.3,0.4)
    \block[green](3.0, 4.6) (0.3,0.4)
          \block[green](3.3, 4.6) (0.3,0.4)
    \block[green](3.6,  4.6) (0.3,0.4)
    \block[green](3.9,  4.6) (0.3,0.4)
    \block[red](4.2,  4.6) (0.3,0.4)
    \block[green](4.5,  4.6) (0.3,0.4)

  \block[green](0, 4.1) (0.3, 0.4)
\block[green](0.3, 4.1) (0.3,0.4)
    \block[green](0.6, 4.1) (0.3,0.4)
    \block[yellow](0.9, 4.1) (0.3,0.4)
    \block[green](1.2, 4.1) (0.3,0.4)
    \block[red](1.5, 4.1) (0.3, 0.4)
    \block[green](1.8, 4.1) (0.3,0.4)
      \block[green](2.1, 4.1) (0.3,0.4)
    \block[green](2.4, 4.1) (0.3,0.4)
      \block[green](2.7, 4.1) (0.3,0.4)
    \block[green](3.0, 4.1) (0.3,0.4)
          \block[green](3.3, 4.1) (0.3,0.4)
    \block[green](3.6,  4.1) (0.3,0.4)
    \block[green](3.9,  4.1) (0.3,0.4)
    \block[green](4.2,  4.1) (0.3,0.4)
    \block[green](4.5,  4.1) (0.3,0.4)

   \block[green](0, 3.6) (0.3, 0.4)
\block[green](0.3, 3.6) (0.3,0.4)
    \block[green](0.6, 3.6) (0.3,0.4)
    \block[yellow](0.9, 3.6) (0.3,0.4)
    \block[green](1.2, 3.6) (0.3,0.4)
    \block[green](1.5, 3.6) (0.3, 0.4)
    \block[green](1.8, 3.6) (0.3,0.4)
      \block[green](2.1, 3.6) (0.3,0.4)
    \block[red](2.4, 3.6) (0.3,0.4)
      \block[green] (2.7, 3.6) (0.3,0.4)
    \block[green](3.0, 3.6) (0.3,0.4)
          \block[green](3.3, 3.6) (0.3,0.4)
    \block[green](3.6,  3.6) (0.3,0.4)
    \block[green](3.9,  3.6) (0.3,0.4)
    \block[green](4.2,  3.6) (0.3,0.4)
    \block[green](4.5,  3.6) (0.3,0.4)

  \block[green](0, 2.9) (0.3, 0.4)
\block[green](0.3, 2.9) (0.3,0.4)
    \block[green](0.6, 2.9) (0.3,0.4)
    \block[yellow](0.9, 2.9) (0.3,0.4)
    \block[green](1.2, 2.9) (0.3,0.4)
    \block[green](1.5, 2.9) (0.3, 0.4)
    \block[green](1.8, 2.9) (0.3,0.4)
      \block[green](2.1, 2.9) (0.3,0.4)
    \block[green](2.4, 2.9) (0.3,0.4)
      \block[green](2.7, 2.9) (0.3,0.4)
    \block[green](3.0, 2.9) (0.3,0.4)
          \block[green](3.3, 2.9) (0.3,0.4)
    \block[green](3.6,  2.9) (0.3,0.4)
    \block[green](3.9,  2.9) (0.3,0.4)
    \block[green](4.2,  2.9) (0.3,0.4)
    \block[red](4.5,  2.9) (0.3,0.4)

    \block[green](0, 2.4) (0.3,0.4)
    \block[green](0.3, 2.4) (0.3,0.4)
    \block[red](0.6, 2.4) (0.3,0.4)
    \block[yellow](0.9, 2.4) (0.3,0.4)
    \block[green](1.2, 2.4) (0.3,0.4)
    \block[green](1.5, 2.4) (0.3, 0.4)
    \block[green](1.8, 2.4) (0.3,0.4)
      \block[green](2.1, 2.4) (0.3,0.4)
    \block[green](2.4, 2.4) (0.3,0.4)
      \block[green](2.7, 2.4) (0.3,0.4)
    \block[green](3.0, 2.4) (0.3,0.4)
          \block[green](3.3, 2.4) (0.3,0.4)
    \block[green](3.6,  2.4) (0.3,0.4)
    \block[green](3.9,  2.4) (0.3,0.4)
    \block[green](4.2,  2.4) (0.3,0.4)
    \block[green](4.5,  2.4) (0.3,0.4)

  \block[red](0, 1.9) (0.3, 0.4)
    \block[green](0.3, 1.9) (0.3,0.4)
    \block[green](0.6, 1.9) (0.3,0.4)
    \block[yellow](0.9, 1.9) (0.3,0.4)
    \block[green](1.2, 1.9) (0.3,0.4)
    \block[green](1.5, 1.9) (0.3,0.4)
    \block[green](1.8, 1.9) (0.3, 0.4 )
      \block[green](2.1, 1.9 ) (0.3,0.4)
    \block[green](2.4, 1.9) (0.3, 0.4)
      \block[green](2.7, 1.9 ) (0.3, 0.4)
    \block[green](3.0, 1.9 ) (0.3, 0.4)
          \block[green](3.3, 1.9) (0.3, 0.4)
    \block[green](3.6,  1.9) (0.3, 0.4)
    \block[green](3.9,  1.9) (0.3,0.4)
    \block[green](4.2,  1.9) (0.3,0.4)
    \block[green](4.5,  1.9) (0.3, 0.4)
  \end{blockmatrix}
 -
\begin{blockmatrix}
\block[white] (-4.5, 1.9) (4.5, 4.8)
\block[white] (4.8, 1.9) (4.5, 4.8)
\block[green](0, 6.3) (0.3, 0.4)
\block[green](0.3, 6.3) (0.3,0.4)
    \block[green](0.6, 6.3) (0.3,0.4)
    \block[yellow](0.9, 6.3) (0.3,0.4)
    \block[green](1.2, 6.3) (0.3,0.4)
    \block[green](1.5, 6.3) (0.3, 0.4)
    \block[green](1.8, 6.3) (0.3,0.4)
      \block[green](2.1, 6.3) (0.3,0.4)
    \block[green](2.4, 6.3) (0.3,0.4)
      \block[green](2.7, 6.3) (0.3,0.4)
    \block[green](3.0, 6.3) (0.3,0.4)
          \block[green](3.3, 6.3) (0.3,0.4)
    \block[green](3.6,  6.3) (0.3,0.4)
    \block[green](3.9,  6.3) (0.3,0.4)
    \block[green](4.2,  6.3) (0.3,0.4)
    \block[red](4.5,  6.3) (0.3,0.4)

   \block[green](0, 5.8) (0.3, 0.4)
\block[green](0.3, 5.8) (0.3,0.4)
    \block[green](0.6, 5.8) (0.3,0.4)
    \block[yellow](0.9, 5.8) (0.3,0.4)
    \block[green](1.2, 5.8) (0.3,0.4)
    \block[green](1.5, 5.8) (0.3, 0.4)
    \block[green](1.8, 5.8) (0.3,0.4)
      \block[green](2.1, 5.8) (0.3,0.4)
    \block[green](2.4, 5.8) (0.3,0.4)
      \block[green](2.7, 5.8) (0.3,0.4)
    \block[green](3.0, 5.8) (0.3,0.4)
          \block[green](3.3, 5.8) (0.3,0.4)
    \block[red](3.6,  5.8) (0.3,0.4)
    \block[green](3.9,  5.8) (0.3,0.4)
    \block[green](4.2,  5.8) (0.3,0.4)
    \block[green](4.5,  5.8) (0.3,0.4)

  \block[green](0, 5.3) (0.3, 0.4)
\block[green](0.3, 5.3) (0.3,0.4)
    \block[green](0.6, 5.3) (0.3,0.4)
    \block[yellow](0.9, 5.3) (0.3,0.4)
    \block[green](1.2, 5.3) (0.3,0.4)
    \block[green](1.5, 5.3) (0.3, 0.4)
    \block[green](1.8, 5.3) (0.3,0.4)
      \block[red](2.1, 5.3) (0.3,0.4)
    \block[green](2.4, 5.3) (0.3,0.4)
      \block[green](2.7, 5.3) (0.3,0.4)
    \block[green](3.0, 5.3) (0.3,0.4)
          \block[green](3.3, 5.3) (0.3,0.4)
    \block[green](3.6,  5.3) (0.3,0.4)
    \block[green](3.9,  5.3) (0.3,0.4)
    \block[green](4.2,  5.3) (0.3,0.4)
    \block[green](4.5,  5.3) (0.3,0.4)

  \block[green](0, 4.6) (0.3, 0.4)
\block[green](0.3, 4.6) (0.3,0.4)
    \block[green](0.6, 4.6) (0.3,0.4)
    \block[yellow](0.9, 4.6) (0.3,0.4)
    \block[green](1.2, 4.6) (0.3,0.4)
    \block[green](1.5, 4.6) (0.3, 0.4)
    \block[green](1.8, 4.6) (0.3,0.4)
      \block[green](2.1, 4.6) (0.3,0.4)
    \block[red](2.4, 4.6) (0.3,0.4)
      \block[green](2.7, 4.6) (0.3,0.4)
    \block[green](3.0, 4.6) (0.3,0.4)
          \block[green](3.3, 4.6) (0.3,0.4)
    \block[green](3.6,  4.6) (0.3,0.4)
    \block[green](3.9,  4.6) (0.3,0.4)
    \block[green](4.2,  4.6) (0.3,0.4)
    \block[green](4.5,  4.6) (0.3,0.4)

  \block[green](0, 4.1) (0.3, 0.4)
\block[green](0.3, 4.1) (0.3,0.4)
    \block[green](0.6, 4.1) (0.3,0.4)
    \block[yellow](0.9, 4.1) (0.3,0.4)
    \block[green](1.2, 4.1) (0.3,0.4)
    \block[red](1.5, 4.1) (0.3, 0.4)
    \block[green](1.8, 4.1) (0.3,0.4)
      \block[green](2.1, 4.1) (0.3,0.4)
    \block[green](2.4, 4.1) (0.3,0.4)
      \block[green](2.7, 4.1) (0.3,0.4)
    \block[green](3.0, 4.1) (0.3,0.4)
          \block[green](3.3, 4.1) (0.3,0.4)
    \block[green](3.6,  4.1) (0.3,0.4)
    \block[green](3.9,  4.1) (0.3,0.4)
    \block[green](4.2,  4.1) (0.3,0.4)
    \block[green](4.5,  4.1) (0.3,0.4)

   \block[green](0, 3.6) (0.3, 0.4)
\block[green](0.3, 3.6) (0.3,0.4)
    \block[red](0.6, 3.6) (0.3,0.4)
    \block[yellow](0.9, 3.6) (0.3,0.4)
    \block[green](1.2, 3.6) (0.3,0.4)
    \block[green](1.5, 3.6) (0.3, 0.4)
    \block[green](1.8, 3.6) (0.3,0.4)
      \block[green](2.1, 3.6) (0.3,0.4)
    \block[green](2.4, 3.6) (0.3,0.4)
      \block[green] (2.7, 3.6) (0.3,0.4)
    \block[green](3.0, 3.6) (0.3,0.4)
          \block[green](3.3, 3.6) (0.3,0.4)
    \block[green](3.6,  3.6) (0.3,0.4)
    \block[green](3.9,  3.6) (0.3,0.4)
    \block[green](4.2,  3.6) (0.3,0.4)
    \block[green](4.5,  3.6) (0.3,0.4)

  \block[green](0, 2.9) (0.3, 0.4)
\block[green](0.3, 2.9) (0.3,0.4)
    \block[green](0.6, 2.9) (0.3,0.4)
    \block[yellow](0.9, 2.9) (0.3,0.4)
    \block[green](1.2, 2.9) (0.3,0.4)
    \block[green](1.5, 2.9) (0.3, 0.4)
    \block[green](1.8, 2.9) (0.3,0.4)
      \block[green](2.1, 2.9) (0.3,0.4)
    \block[green](2.4, 2.9) (0.3,0.4)
      \block[green](2.7, 2.9) (0.3,0.4)
    \block[green](3.0, 2.9) (0.3,0.4)
          \block[green](3.3, 2.9) (0.3,0.4)
    \block[green](3.6,  2.9) (0.3,0.4)
    \block[green](3.9,  2.9) (0.3,0.4)
    \block[red](4.2,  2.9) (0.3,0.4)
    \block[green](4.5,  2.9) (0.3,0.4)

    \block[green](0, 2.4) (0.3,0.4)
    \block[green](0.3, 2.4) (0.3,0.4)
    \block[green](0.6, 2.4) (0.3,0.4)
    \block[yellow](0.9, 2.4) (0.3,0.4)
    \block[red](1.2, 2.4) (0.3,0.4)
    \block[green](1.5, 2.4) (0.3, 0.4)
    \block[green](1.8, 2.4) (0.3,0.4)
      \block[green](2.1, 2.4) (0.3,0.4)
    \block[green](2.4, 2.4) (0.3,0.4)
      \block[green](2.7, 2.4) (0.3,0.4)
    \block[green](3.0, 2.4) (0.3,0.4)
          \block[green](3.3, 2.4) (0.3,0.4)
    \block[green](3.6,  2.4) (0.3,0.4)
    \block[green](3.9,  2.4) (0.3,0.4)
    \block[green](4.2,  2.4) (0.3,0.4)
    \block[green](4.5,  2.4) (0.3,0.4)

  \block[green](0, 1.9) (0.3, 0.4)
    \block[red](0.3, 1.9) (0.3,0.4)
    \block[green](0.6, 1.9) (0.3,0.4)
    \block[yellow](0.9, 1.9) (0.3,0.4)
    \block[green](1.2, 1.9) (0.3,0.4)
    \block[green](1.5, 1.9) (0.3,0.4)
    \block[green](1.8, 1.9) (0.3, 0.4 )
      \block[green](2.1, 1.9 ) (0.3,0.4)
    \block[green](2.4, 1.9) (0.3, 0.4)
      \block[green](2.7, 1.9 ) (0.3, 0.4)
    \block[green](3.0, 1.9 ) (0.3, 0.4)
          \block[green](3.3, 1.9) (0.3, 0.4)
    \block[green](3.6,  1.9) (0.3, 0.4)
    \block[green](3.9,  1.9) (0.3,0.4)
    \block[green](4.2,  1.9) (0.3,0.4)
    \block[green](4.5,  1.9) (0.3, 0.4)
  \end{blockmatrix}
\end{tiny}
\end{equation}

\end{proof}
\end{lemma}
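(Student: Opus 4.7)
The plan is to reduce to the situation where Lemma \ref{gen coloring} applies uniformly to both tables, and then run a parallel subdivision process. First I would pass to the subtables $T_0'\subset T_0$ and $T_1'\subset T_1$ keeping exactly those columns in which a fixed reference element $g\in G$ is among the most frequent entries. By Remark \ref{most freq. el.} and compatibility, these subtables share the same set of columns and, by the hypothesis on the column count, have at least $K(|G|(F(G)+1),|G|,3)$ columns. Applying Lemma \ref{dots in each row} to each of $T_0'$ and $T_1'$ (these quadratic moves act within rows of $T_i$ and preserve compatibility), I may assume every row contains at most $B:=|G|(F(G)+1)$ dots. Since dots take at most $|G|$ distinct values in $G$, the pairs $T_0'\subset T_0'$ and $T_1'\subset T_1'$ both satisfy the assumptions of Definition \ref{good table} with $s=B$, $r=|G|$, $t=3$, $k=k_0:=K(B,|G|,3)$, so the properties $S(B,|G|,3,k_0,T_0',T_0')$ and $S(B,|G|,3,k_0,T_1',T_1')$ both hold.

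Next I would run an iterative algorithm. The invariant at step $i$ is a choice of a vertical stripe in both tables whose parts $T'_{0,j}\subset T_0'$ and $T'_{1,j}\subset T_1'$ (indexed by the same $j$, since the subdivisions in the two tables are performed on matching columns) each satisfy a property $S(s_{\epsilon,i,j},r_{\epsilon,i,j},3|G|^i,k_i,T_\epsilon',T'_{\epsilon,j})$ for $\epsilon\in\{0,1\}$. To pass to step $i+1$, I would apply the transformation provided by Definition \ref{good table} to every $T'_{\epsilon,j}$ in turn. Crucially, clause (ii) of that definition guarantees that quadratic moves used on one subtable do not disturb dots in the other subtables of the same vertical stripe, so the operations commute in the sense that they can be performed sequentially without interference. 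The output of each application is a subdivision of $T'_{\epsilon,j}$ into $3|G|^i$ vertical parts $T'_{\epsilon,j,a}$, each further split into $|G|$ horizontal parts $T'_{\epsilon,j,a,b}$, such that for every $j$ at most one index $a$ fails to satisfy, for every $b$, a property $S$ with strictly smaller $s$ or strictly smaller $r$.

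The coordination step is the key pigeonhole: across the two tables there are at most $|G|^i$ excluded indices $a$ coming from $T_0'$ and at most $|G|^i$ coming from $T_1'$, while the number of candidates is $3|G|^i$. Hence some index $a_0$ is good for every $j$ in both tables simultaneously, and the vertical substripe indexed by $a_0$ becomes the new common stripe for step $i+1$, with all parameters $s,r$ nonincreasing and at least one of them strictly decreasing per subtable. This loop terminates in at most $B+|G|$ steps because $(s,r)$ lives in a finite grid; when every subtable reaches $s=1$ the definition forces $k\geq 2$, which by the meaning of $s$ means that within the two surviving columns each row of both tables contains at most one dot, yielding the two consecutive columns $c_j,c_{j+1}$ required by the conclusion.

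The main obstacle I anticipate is precisely this simultaneity: Lemma \ref{gen coloring} is a statement about a single pair $Q\subset T$, so applying it independently to $T_0'$ and $T_1'$ could select incompatible vertical indices, ruining the parallel structure needed for the final exchange of dots between $\tilde T_0$ and $\tilde T_1$. The trick is to fix the initial parameter $t=3$ and to inflate $t$ by a factor of $|G|$ at each level (mirroring the $|G|$-fold horizontal refinement), so that the budget of vertical candidates $3|G|^i$ always strictly exceeds the combined $2|G|^i$ bad indices; this is exactly why the bound $K(B,|G|,3)$ appears in the hypothesis on the column count, and why the recursion in Definition \ref{good table} was set up with $|G|t$ at each level. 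One then has to check carefully that the non-interference clause (ii) of Definition \ref{good table} is preserved through the recursive applications, which is a bookkeeping matter but not a conceptual difficulty.
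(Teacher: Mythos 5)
Your proposal is correct and follows essentially the same route as the paper: restricting to the columns where the reference element $g$ is most frequent, invoking Lemma \ref{dots in each row} to get the bound $B=|G|(F(G)+1)$, and then running the parallel subdivision algorithm of Definition \ref{good table}, with the pigeonhole count $2|G|^i<3|G|^i$ selecting a common vertical index $a_0$ at each stage. The non-interference clause (ii) and the termination via $s=1$, $k\geq 2$ are used exactly as in the paper's argument.
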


\begin{lemma}\label{Two columns with same rows}

Let $T_0$ and $T_1$ be two compatible tables with $n$ columns, for $n$ sufficiently large. Then we can transform $T_0$ and $T_1$ using quadratic moves such that the following holds: there exists $j$ such that the $j$-th and $(j+1)$-st columns in $T_0$ equal respectively the $j$-th and $(j+1)$-st columns in $T_1$. 

\begin{proof}

We restrict to subtables $T'_0$ and $T'_1$ where $g$ is the most frequent element, as in Remark \ref{most freq. el.}. By Lemma \ref{Two columns not containing dots in same row}, we may assume that in every row of the $T'_0$ and $T'_1$ we have only one dot in the first two columns. Now, we can permute rows in such a way that the dots are equal in the corresponding entries. The elements in the rows which are not dots are not necessarily the same in each row. We show that given any pair of elements $g_i, g_j\in F_{T'_0}$ in the first column and in the rows $r_i, r_j$ respectively, we can exchange them.  

Since $g_i$ and $g_j$ are in $F_{T'_0}$, we can find two rows, say $r_s$ and $r_t$ respectively, such that we have at least $F(G)$ copies of $g_i$ and $F(G)$ copies of $g_j$ in $r_s$ and $r_t$ respectively -- see the table below. 
By Lemma \ref{lem:move}, we can move at least $F(G)-|G|-2$ copies of $g_i$ to the row $r_j$ and at least $F(G)-|G|-2$ copies of $g_j$ to $r_i$; here we subtract two because we are avoiding the first two columns. If there is a column $c_t$ containing $g_i$ and $g_j$ in its $j$-th and $i$-th rows respectively, then we exchange them by a quadratic move on the column $c_t$ and the first column. Otherwise, we proceed as follows. We restrict to a subtable containing columns where the row $r_j$ has $g_i$ as its entries. By Lemma \ref{Counting of zeros} for $\epsilon=1/|G|$, in this subtable we may find $|G|$ copies of $g$ in some row $r_t$.  Then we move some copies of $g$ to the row $r_i$ applying Lemma \ref{lem:move}. Analogously for $g_j$, we may move some copies of $g$ to the row $r_j$. Here are depicted in red the copies of $g$ and in blue the quadratic moves putting those copies of $g$ in $r_i$ and $r_j$ respectively.

$$
T'_0=\begin{bmatrix}
\bullet & \ldots & \ldots & \ldots &\ldots & \ldots & \ldots & \ldots & \ldots & \ldots \\
\bullet & \ldots & \color{red}{g} & \color{red}{g} & \color{red}{g} & \color{red}{g} & \ldots & \ldots & \ldots & \ldots \\
\bullet & \ldots & \color{blue}{\downarrow} & \color{blue}{\downarrow} & \color{blue}{\downarrow} &\color{blue}{\downarrow} & \ldots  & \ldots & \ldots & \ldots \\
g_i & \ldots & \ldots & \ldots & \ldots & \ldots  & g_j & g_j & g_j & g_j \\
\ldots & \ldots & \ldots & \ldots & \ldots & \ldots & \ldots & \ldots & \ldots & \ldots \\
\ldots & \bullet & \ldots & \ldots & \ldots & \ldots & \ldots & \ldots & \ldots & \ldots \\
g_j & \bullet &  g_i & g_i & g_i & g_i & \ldots & \ldots & \ldots & \ldots \\
\ldots & \bullet & \ldots & \ldots & \ldots & \ldots &\color{blue}{\uparrow} & \color{blue}{\uparrow} & \color{blue}{\uparrow} & \color{blue}{\uparrow} \\
\ldots & \ldots & \ldots & \ldots & \ldots & \ldots & \color{red}{g} & \color{red}{g} & \color{red}{g} & \color{red}{g} \\
\ldots & \ldots & \ldots & \ldots & \ldots & \ldots & \ldots & \ldots &\ldots &\ldots \\
\end{bmatrix}.
$$
Now, we perform a quadratic move exchanging $g_i$ and $g_j$ in a suitable subtable of $T_0'$:

$$
\begin{bmatrix}
g_i & g_j & g  \\
g_j &  g & g_i \\
\end{bmatrix}.
$$
\noindent Such moves allow to adjust all elements in the first two columns that are {\it not} dots. This concludes the proof. 
\end{proof}

\end{lemma}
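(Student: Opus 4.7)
My plan is to combine Lemma \ref{Two columns not containing dots in same row} with the ``frequent element moves'' already used in the proof of that lemma, applied this time to equalize actual entries rather than merely the dot-pattern. First, I would pass from $T_0, T_1$ to the subtables $T'_0, T'_1$ obtained by retaining only those columns in which the reference element $g \in G$ is among the most frequent entries; by Remark \ref{most freq. el.} this only divides the number of columns by at most $|G|$, so for $n$ large enough we still have enough columns. Then I apply Lemma \ref{Two columns not containing dots in same row} to obtain, after quadratic moves applied independently to $T'_0$ and $T'_1$, a pair of adjacent columns (which I may as well take to be the first two) such that every row of each table has at most one dot in these two columns.

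Once the dot-pattern in the first two columns is this tame, the dots are easy to align. Since tables are considered up to row permutation and since compatibility forces the two columns of $T'_0$ and the two columns of $T'_1$ to coincide as multisets, I can permute the rows of $T'_0$ and of $T'_1$ so that a row of $T'_0$ contains a dot in position $1$ (resp. position $2$) exactly when the corresponding row of $T'_1$ does, and moreover so that the actual dot values match. The remaining task is therefore to adjust, row by row, the non-dot entries in the first two columns, all of which lie in $F_{T'_0} = F_{T'_1}$.

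The heart of the argument is then showing that for any two frequent elements $g_i, g_j \in F_{T'_0}$ appearing in the first column of $T'_0$ in rows $r_i$ and $r_j$, a quadratic move can swap them without disturbing the dots or the second column. Because $g_i$ and $g_j$ are frequent, I can locate rows $r_s, r_t$ each containing more than $F(G)$ copies of $g_i$ and $g_j$ respectively. Using Lemma \ref{lem:move} I transplant roughly $F(G) - |G| - 2$ copies of $g_i$ into $r_j$ and of $g_j$ into $r_i$, all while steering clear of the first two columns. If after these moves some column $c_t$ happens to carry $g_i$ in row $r_j$ and $g_j$ in row $r_i$, a single quadratic move on the columns $c_t$ and $1$ does the swap.

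The main obstacle, as in Lemma \ref{Two columns not containing dots in same row}, is the case when no such column $c_t$ exists. Here I would use the reference element $g$: among the columns where $r_j$ now carries $g_i$ I invoke Lemma \ref{Counting of zeros} with $\epsilon = 1/|G|$ to find a row $r_{t'}$ containing at least $|G|$ copies of $g$, and by Lemma \ref{lem:move} transfer some copies of $g$ into $r_i$ (and symmetrically into $r_j$). This produces a column with $g$ in $r_i$ and $g_i$ in $r_j$, together with a column with $g$ in $r_j$ and $g_j$ in $r_i$. A quadratic move in the $2 \times 3$ subtable with rows $(g_i, g_j, g)$ and $(g_j, g, g_i)$ then performs the desired swap. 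Iterating this for all pairs equalizes the non-dot entries of the first two columns of $T'_0$ and $T'_1$, completing the proof.
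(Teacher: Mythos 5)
Your proposal follows the paper's own proof essentially step for step: restricting to $T'_0, T'_1$ via Remark \ref{most freq. el.}, invoking Lemma \ref{Two columns not containing dots in same row}, aligning the dots by row permutation, and then swapping frequent entries $g_i, g_j$ by transplanting copies via Lemma \ref{lem:move}, falling back on Lemma \ref{Counting of zeros} and the reference element $g$ to build the $2\times 3$ quadratic move when no suitable column exists. This matches the paper's argument, including the same constants and the same final move.
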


\begin{theorem}\label{finite phylo}

For any finite abelian group $G$, the phylogenetic complexity $\phi(G)$ of $G$ is finite. 

\begin{proof}

Let $G$ be a finite abelian group. Fix $N\gg |G|$. Once $N$ is fixed, the phylogenetic complexity $\phi(G,N)$ is finite by the Hilbert Basis Theorem. Assume $n>N$. We will show that $\phi(G,n)\leq \phi(G,n-1)$. This implies that they are equal. \\
\indent Let $B$ be a binomial in $I(X(G,K_{1,n}))$ identified with a compatible pair of $d\times n$ tables $T_0$ and $T_1$, as described in Section 2. By Lemma \ref{Two columns with same rows}, we may assume there exist two columns $c_j, c_{j+1}$ in $T_0$ and their corresponding columns $c_j', c_{j+1}'$ in $T_1$, for some $1\leq  j \leq n$, such that, for each row, $c_j$ has the same entries as $c_j'$, and $c_{j+1}$ has the same entries as $c_{j+1}'$. Note that in Lemma \ref{Two columns with same rows} we use quadratic moves to transform two given tables $T_0$ and $T_1$ into a pair of tables such that they satisfy the condition on columns above. \\
\indent Now, summing coordinatewise the columns $c_j$ and $c_{j+1}$ in $T_0$, the columns $c_j'$ and $c_{j+1}'$ in $T_1$, we obtain a new pair of tables $\hat{T_0}$ and $\hat{T_1}$ with $n -1$ columns. The pair $\hat{T_0}, \hat{T_1}$ is identified with a binomial $\hat{B}\in I(X(G,K_{1,n-1}))$. By definition, this binomial is generated by binomials of degree at most $\phi(G,n-1)$. Hence, we may transform $\hat{T_0}$ into $\hat{T_1}$ by exchanging in every step at most $\phi(G,n-1)$ rows. Each of such steps lifts to an exchange among at most $\phi(G,n-1)$ rows in tables $T_0$ and $T_1$. After applying all the steps, the resulting tables $\tilde T_0$ and $\tilde T_1$ still do not have to be equal. However, they only differ possibly on the columns $c_j$ and $c_{j+1}$. Without loss of generality we may assume $j=1$. Thus the tables $\tilde T_0$ and $\tilde T_1$ are as follows: 

$$
\tilde T_0-\tilde T_1=\begin{bmatrix}
a_{j_1} & b_{j_1} & \ldots & \ldots &\ldots  \\
a_{j_2} & b_{j_2} & \ldots & \ldots & \ldots  \\
\ldots & \ldots & \ldots & \ldots & \ldots \\
a_{j_d} & b_{j_d} & \ldots & \ldots & \ldots\\
\end{bmatrix}
-
\begin{bmatrix}
a_{k_1} & b_{k_1} & \ldots & \ldots & \ldots \\
a_{k_2} & b_{k_2} & \ldots & \ldots & \ldots  \\
\ldots & \ldots & \ldots & \ldots & \ldots \\
a_{k_d} & b_{k_d} & \ldots & \ldots & \ldots \\
\end{bmatrix},
$$

\noindent where columns {\it different} from the first two are identical. Suppose there exists $l$ such that $a_{j_l}\neq a_{k_l}$ (and $b_{j_l}\neq b_{k_l}$). Then $a_{j_l}+ b_{j_l}=a_{k_l} + b_{k_l}$, since the the $l$-th rows of $\tilde T_0$ and $\tilde T_1$ are identical except in the first two columns and, moreover, every row is a flow. On the other hand, there exists $s$ such that $a_{k_l}=a_{j_s}$ and $b_{k_l}=b_{j_s}$. Thus we make a quadratic move between $a_{j_l}, b_{j_l}$ and $a_{j_s}, b_{j_s}$. This concludes the proof. 
\end{proof}
\end{theorem}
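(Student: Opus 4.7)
The plan is to prove the statement by induction on $n$, using Lemma \ref{Two columns with same rows} to reduce phylogenetic complexity for $n$ columns to that for $n-1$ columns. First I would fix $N$ large enough that Lemma \ref{Two columns with same rows} applies to any compatible pair of $d\times n$ tables with $n\geq N$; by the Hilbert Basis Theorem, $\phi(G,N)$ is automatically finite, so it suffices to prove $\phi(G,n)\leq \max(\phi(G,n-1),2)$ for $n>N$, from which it follows that $\phi(G)=\max_{n\leq N}\phi(G,n)$.

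Fix a binomial $B\in I(X(G,K_{1,n}))$, identified with a compatible pair of $d\times n$ tables $T_0,T_1$. First I would invoke Lemma \ref{Two columns with same rows} to perform a sequence of quadratic (degree two) moves on $T_0$ and $T_1$ until there are two consecutive columns $c_j,c_{j+1}$ that agree row-by-row in both tables. These quadratic moves are degree-two operations, so they do not affect the bound we are trying to establish. Next I would contract the pair $(c_j,c_{j+1})$ by replacing it with the single column of coordinatewise sums, producing a compatible pair $\hat T_0,\hat T_1$ with $n-1$ columns. Since every row of the original tables is a flow in $G^n$, the rows of $\hat T_0,\hat T_1$ are flows in $G^{n-1}$, so this contracted pair corresponds to a binomial $\hat B\in I(X(G,K_{1,n-1}))$.

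By the inductive hypothesis, $\hat B$ is generated by binomials of degree at most $\phi(G,n-1)$, i.e.\ one can transform $\hat T_0$ into $\hat T_1$ by a sequence of row-exchanges involving at most $\phi(G,n-1)$ rows at a time. Each such exchange lifts to an exchange on the same rows of $T_0$ (respectively $T_1$): the lift is compatible on the $n-2$ uncontracted columns, and on the contracted pair $c_j,c_{j+1}$ it preserves the pairwise sum $a+b$ in each row. After performing all lifts, the resulting tables $\tilde T_0,\tilde T_1$ are identical except possibly in columns $c_j,c_{j+1}$, where the $l$-th rows satisfy $a_{j_l}+b_{j_l}=a_{k_l}+b_{k_l}$. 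Finally I would clean up the residual discrepancy: since every column in $\tilde T_0$ and $\tilde T_1$ remains compatible as a multiset, for any $l$ with $a_{j_l}\neq a_{k_l}$ there exists an index $s$ with $(a_{j_s},b_{j_s})=(a_{k_l},b_{k_l})$, and exchanging the entries of these two rows in columns $c_j,c_{j+1}$ is a quadratic move, permissible because the partial column sums cancel.

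The main obstacle I anticipate is making the lifting step rigorous: a single exchange on the contracted tables specifies how the sum $a_{j_l}+b_{j_l}$ should be reassigned among rows, but not how to split this sum back into its two summands in the lifted table. The resolution is not to insist on matching each individual column during the lift, but only to match everything outside $\{c_j,c_{j+1}\}$ and to defer the correction on $\{c_j,c_{j+1}\}$ to a final batch of quadratic moves justified by the fact that row sums remain zero in $G$. With this observation the induction closes, each inductive step producing new moves of degree at most $\max(\phi(G,n-1),2)$, and the theorem follows.
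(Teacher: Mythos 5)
Your proposal is correct and follows essentially the same route as the paper's own proof: reduce to large $n$ via the Hilbert Basis Theorem, apply Lemma \ref{Two columns with same rows} to align two consecutive columns, contract them to pass to $n-1$ columns, lift the generating moves, and repair the residual discrepancy in the two contracted columns with quadratic moves. Your explicit handling of the lifting ambiguity (deferring the split of $a+b$ to a final batch of quadratic moves) is exactly the point the paper's argument relies on, just stated more carefully.
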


\section{Open questions}

In this last section, we collect some well-known open questions regarding group-based models for the convenience of the reader. 
We start from the central conjecture in this context. 

\begin{conjecture}[{\cite[Conjecture 29]{SS}}]
For any finite abelian group $G$, $\phi(G)\leq |G|$.
\end{conjecture}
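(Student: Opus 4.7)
The plan is to strengthen Theorem \ref{finite phylo} from a qualitative finiteness statement into the quantitative bound $\phi(G)\leq |G|$ by induction on the degree $d$ of a binomial. The base case $d\leq |G|$ is vacuous, so fix a compatible pair of tables $T_0, T_1$ of degree $d>|G|$ with $n$ columns; I aim to show that the associated binomial lies in the ideal generated by binomials of degree at most $|G|$. By Theorem \ref{finite phylo} the function $n\mapsto \phi(G,n)$ stabilizes for large $n$, so it suffices to treat this large-$n$ regime, where the tools of Section \ref{sec:proof} apply. For small $n$, direct computation together with the Zariski-open-subset bound of \cite{CFSM, casanellas2015complete} handles the base.

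The key new combinatorial statement to prove is a \emph{Carath\'eodory-type lemma} for multisets of flows: given any pair of compatible multisets $T_0, T_1$ of more than $|G|$ flows each, there exist sub-multisets $S_0\subset T_0$ and $S_1\subset T_1$, each of size at most $|G|$, which are themselves compatible (i.e.~form a binomial of degree $\leq |G|$). Granted this lemma, one can perform the corresponding degree-$\leq |G|$ move to replace $S_0$ in $T_0$ by $S_1$, producing a compatible pair $T_0', T_1$ with strictly more coinciding rows; iterating drives $T_0$ to $T_1$ via moves of degree at most $|G|$ and completes the induction. To establish the lemma, I would study the kernel of the lattice map $\ZZ^{\Gf} \to M^n$ sending each flow to its indicator vector and show that any integer vector in this kernel decomposes into primitive elements of $\ell^1$-norm at most $2|G|$; this is equivalent to bounding the maximal degree of a Graver basis element of the toric ideal $I(X(G,K_{1,n}))$ by $|G|$.

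The main obstacle is precisely this Graver bound. The machinery of dots, frequent elements, and vertical stripes developed for Theorem \ref{finite phylo} is naturally suited to producing quadratic moves but does not directly control the total number of rows affected during a sequence of such moves. I expect the critical new ingredient will be a \textbf{compression procedure}: a way to fuse a long sequence of quadratic moves (built from Lemma \ref{Two columns with same rows}) into a single move of degree at most $|G|$, using the common reference element $g$ from Remark \ref{most freq. el.} as a pivot. Such a procedure should exploit the fact that frequent elements appear more than $|G|^2+3|G|$ times per row (Definition \ref{def:dots}), so that the surplus copies of $g$ act as a reservoir permitting the collapse of intermediate moves, while the vertical-stripe subdivision localizes the degree cost. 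Verifying the compatibility and simultaneously the degree bound of this compression, particularly in the presence of many distinct dot-types and without undoing the gains of Lemma \ref{Two columns not containing dots in same row}, is where the bulk of the new work lies, and where ideas genuinely beyond the present paper will be required.
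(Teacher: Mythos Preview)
The statement you are attempting to prove is Conjecture~29 of Sturmfels--Sullivant, which the present paper lists among its \emph{open questions}; there is no proof of it in the paper to compare against. The paper's contribution is the qualitative finiteness of $\phi(G)$ (Theorem~\ref{finite phylo}), not the quantitative bound $\phi(G)\leq |G|$.

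Your proposal contains genuine gaps. First, the ``Carath\'eodory-type lemma'' you formulate is, as you yourself note, equivalent to bounding the degree of the \emph{Graver basis} of $I(X(G,K_{1,n}))$ by $|G|$. This is strictly stronger than the conjecture, which concerns only a minimal generating set: for toric ideals the Graver degree typically exceeds the generation degree, and there is no reason to expect it to be bounded by $|G|$ here. So even if the conjecture is true, your key lemma may well be false, and the reduction to it is a step in the wrong direction. Second, your treatment of the base case (small $n$) is not sound: the results of \cite{CFSM, casanellas2015complete} give degree bounds only on a Zariski open subset, not ideal-theoretically, and ``direct computation'' for all small $n$ and all finite abelian $G$ is not a finite task. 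Third, the ``compression procedure'' is not an argument but a hope; you acknowledge that it requires ideas beyond the paper, and nothing in the machinery of dots and vertical stripes suggests how to bound the number of rows touched by a composite of quadratic moves. In short, the proposal does not constitute a proof, and the central reduction (to a Graver bound) is likely a dead end rather than a simplification.
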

Taking into account the inductive approach presented in this article, it seems crucial to first understand the simplest tree $K_{1,3}$.
\begin{conjecture}
For any finite abelian group $G$, $\phi(G,3)\leq |G|$.
\end{conjecture}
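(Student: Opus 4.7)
My plan is to proceed by induction on the degree $d$, which is the number of rows of a compatible pair of $d\times 3$ tables $(T_0,T_1)$ encoding the binomial. The base case $d\le |G|$ is immediate since the binomial itself then has degree at most $|G|$. For the inductive step $d>|G|$, I would aim to exhibit a compatible pair of subtables $S_0\subseteq T_0$ and $S_1\subseteq T_1$, of common size $d'\le |G|$, such that swapping $S_0$ for $S_1$ in $T_0$ produces a table that shares strictly more rows (as multisets) with $T_1$; this reduces the problem to a binomial of strictly smaller degree.

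Since a flow $(g_1,g_2,g_3)$ with $n=3$ is determined by its first two entries, I would encode each table $T_k$ as the $|G|\times |G|$ nonnegative integer matrix $M_k$ whose entry $M_k[g_1,g_2]$ counts how often the flow $(g_1,g_2,-g_1-g_2)$ appears as a row. Compatibility of $T_0$ and $T_1$ then translates to three simultaneous marginal conditions: $M_0$ and $M_1$ have identical row sums (column $1$ of the tables), column sums (column $2$), and antidiagonal sums along the fibers of $(g_1,g_2)\mapsto g_1+g_2$ (column $3$). A binomial of degree $d'$ is encoded by a pair of nonnegative integer matrices of total mass $d'$ whose difference lies in the kernel of this three-margin map.

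The heart of the proof becomes a combinatorial cycle-finding step: if $M_0$ and $M_1$ have disjoint supports (if they share an entry, remove a common row and induct on $d-1$) and their total mass exceeds $|G|$, show that inside their supports one can find a nonzero pair $(S_0,S_1)$ with $S_0\subseteq M_0$, $S_1\subseteq M_1$, total mass at most $|G|$, and all three margins vanishing. The antidiagonal constraint is what makes this nontrivial: retaining only row and column margins would give a classical transportation problem in which circuits have length at most four.

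The main obstacle is precisely this cycle-finding step with all three margins simultaneously controlled. I would try to exploit the translation action of $G$ on each coordinate (which acts on the set of flows and preserves compatibility) and to induct on $|G|$ via the primary decomposition $G\cong\bigoplus_i \ZZ_{p_i^{a_i}}$, eventually reducing to prime-cyclic cases where $\phi(\ZZ_2,3)=2$ and $\phi(\ZZ_3,3)=3$ are already established. An alternative, more hands-on route would be to adapt the ``dots and frequent elements'' machinery developed earlier in the paper, but with quantitative bounds sharpened for the much tighter target degree $|G|$ rather than mere finiteness; this is the most direct attack but will likely require refined pigeonhole estimates that go beyond the techniques used for Theorem~\ref{finite phylo}.
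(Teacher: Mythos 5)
This statement is not proved in the paper: it appears in the final section as an open conjecture (a strengthening, for the claw tree $K_{1,3}$, of Sturmfels--Sullivant's Conjecture 29), so there is no proof of it here to compare yours against. Your proposal is a reformulation and a research plan rather than a proof, and it contains a genuine gap at exactly the decisive step. The encoding of degree-$d$ binomials for $n=3$ as pairs of nonnegative $|G|\times|G|$ matrices with equal row sums, column sums, and fiber sums of $(g_1,g_2)\mapsto g_1+g_2$ is correct, as is the reduction to disjoint supports and the observation that a mass-$\leq|G|$ kernel element $(S_0,S_1)$ with $S_0\leq M_0$, $S_1\leq M_1$ would strictly increase the number of shared rows and allow induction on $d$. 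But the existence of such a bounded-mass kernel element inside the supports is precisely the content of the conjecture restated in different language: it is equivalent to bounding the degrees in a Markov basis of this three-margin toric ideal by $|G|$, and no argument is given for it. The remark that dropping the third margin yields a classical transportation problem with circuits of length four correctly identifies why the problem is hard, but does not address it.

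The two tools you propose to close the gap are not known to work. The translation action of $G^2$ on flows permutes the cells of the matrices and preserves compatibility, but it does not obviously produce short circuits. The reduction via the primary decomposition $G\cong\bigoplus_i\ZZ_{p_i^{a_i}}$ has no known mechanism: there is no established result transferring a degree bound for the factors to a degree bound for the product (the toric fiber product machinery of the paper handles gluing trees, not gluing group factors), and in any case the base cases you would need are themselves open beyond $\ZZ_2$ and $\ZZ_3$ --- the paper's own table records that even $\phi(\ZZ_2\times\ZZ_2)=4$ is only conjectural at the ideal-theoretic level. Your fallback of sharpening the ``dots and frequent elements'' machinery is also unlikely to succeed as stated: that machinery is designed for the regime of very many columns ($n$ large relative to $F(G)$), whereas here $n=3$, so none of its pigeonhole estimates apply. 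In short, the proposal sets up the problem correctly but leaves the conjecture itself unproved.
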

Notice that our main theorem -- Theorem \ref{finite phylo} -- can be restated as follows: the function $\phi(G,\cdot)$ is eventually constant.
The ensuing result would be a desired strengthening of ours.
\begin{conjecture}[{\cite[Conjecture 9.3]{JaJCTA}}]\label{con:constant}
We have $\phi(G,n+1)=\max(2,\phi(G,n))$.
\end{conjecture}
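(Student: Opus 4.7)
The conjecture $\phi(G,n+1)=\max(2,\phi(G,n))$ is equivalent to the conjunction of \emph{monotonicity} ($\phi(G,n+1)\geq\max(2,\phi(G,n))$) and \emph{stability} ($\phi(G,n+1)\leq\max(2,\phi(G,n))$). My plan is to handle these by two different tools: a face inclusion argument for the former, and the toric fiber product machinery of Sullivant for the latter.

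For monotonicity, I realize $P_{G,n}$ as a face of $P_{G,n+1}$ via the map $(g_1,\dots,g_n)\mapsto(g_1,\dots,g_n,0)$. This is a face because every vertex of $P_{G,n+1}$ has nonnegative coordinates $x_{(n+1,g)}$ and the simultaneous vanishing of $x_{(n+1,g)}$ for all $g\neq 0$ selects exactly those vertices with $g_{n+1}=0$. Consequently $X(G,K_{1,n})$ is a torus-invariant subvariety of $X(G,K_{1,n+1})$, cut out by setting the variables corresponding to complementary vertices to zero. Any minimal degree-$d$ generator $B$ of $I(X(G,K_{1,n}))$ lifts to $I(X(G,K_{1,n+1}))$; a hypothetical generation $B=\sum_j m_j B_j$ in the larger ideal with $\deg B_j<d$ would, upon specialization (setting complement variables to zero), produce a generation of $B$ in $I(X(G,K_{1,n}))$ by binomials of degree $<d$, contradicting minimality. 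This yields $\phi(G,n+1)\geq\phi(G,n)$. The second part of monotonicity, $\phi(G,n+1)\geq 2$, follows by exhibiting an explicit quadratic binomial in $I(X(G,K_{1,n+1}))$ not in the linear span of lower-degree generators; such binomials exist for $n+1\geq 3$ and nontrivial $G$.

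For stability, I would split the center $c$ of $K_{1,n+1}$ into two vertices $c_1,c_2$ joined by an internal edge, allocating two of the leaves to $c_1$ and the remaining $n-1$ leaves to $c_2$. This realizes $X(G,K_{1,n+1})$ as a toric fiber product $X(G,K_{1,3})\otimes_G X(G,K_{1,n})$ over the $G$-state at the new inner vertex, mirroring the construction used by Sturmfels and Sullivant in \cite{SS,Sethtfp}. Sullivant's theorem on compatible toric fiber products (which applies to group-based models because the $G$-action intertwines across the gluing) produces a Markov basis of the product from lifted Markov bases of the two factors together with quadratic matching binomials. This gives $\phi(G,n+1)\leq\max(2,\phi(G,3),\phi(G,n))$, and combining with the monotonicity bound $\phi(G,3)\leq\phi(G,n)$ for $n\geq 3$ proved in Step 1 yields $\phi(G,n+1)\leq\max(2,\phi(G,n))$.

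The main obstacle is verifying that Sullivant's toric fiber product theorem applies with the sharp upper bound $\max(2,\phi(G,\cdot))$, and in particular confirming that the matching binomials contributed by the fiber product have degree exactly $2$ in this group-based setting. A secondary issue is the boundary at small $n\leq 2$, where the split decomposition degenerates and the conjectural equality may require separate verification (e.g.\ via the Hilbert basis theorem in low degree). Once these are checked, combining the two directions yields the exact equality $\phi(G,n+1)=\max(2,\phi(G,n))$ for all $n\geq 3$, strengthening Theorem \ref{finite phylo} as the conjecture predicts.
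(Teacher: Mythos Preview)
The statement you are attempting to prove is Conjecture~\ref{con:constant}, listed among the open problems; the paper does \emph{not} prove it, and its main Theorem~\ref{finite phylo} only establishes the weaker fact that $\phi(G,\cdot)$ is eventually constant. So there is no ``paper's own proof'' to compare against, and the relevant question is whether your argument actually settles the conjecture.

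Your monotonicity direction is fine: the map $(g_1,\dots,g_n)\mapsto(g_1,\dots,g_n,0)$ does exhibit $P_{G,n}$ as a face of $P_{G,n+1}$, and passing to a face can only lower the maximal degree of ideal generators, giving $\phi(G,n)\leq\phi(G,n+1)$.

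The stability direction, however, contains a genuine gap that cannot be repaired along the lines you propose. When you ``split the center $c$ of $K_{1,n+1}$ into two vertices $c_1,c_2$ joined by an internal edge,'' the tree you obtain is \emph{not} $K_{1,n+1}$ but a caterpillar $T'$ with two internal vertices. Sullivant's toric fiber product theorem tells you that $X(G,T')$ is the fiber product of $X(G,K_{1,3})$ and $X(G,K_{1,n})$, and hence that $\phi(G,T')\leq\max(2,\phi(G,3),\phi(G,n))$; it says nothing about $X(G,K_{1,n+1})$, which is a different, higher-dimensional variety. The star $K_{1,n+1}$ has no internal edge, so it admits no such splitting. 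This is precisely the content of the Sturmfels--Sullivant reduction recalled in the introduction: toric fiber products reduce arbitrary trees to claw trees, but claw trees are the irreducible pieces and cannot be decomposed further. Had your argument worked, the conjecture would already have been a corollary of \cite{SS,Sethtfp} rather than an open problem. The obstacle you flag at the end---checking that the fiber product theorem applies with the sharp bound---is therefore not the real issue; the identification $X(G,K_{1,n+1})\cong X(G,K_{1,3})\otimes_G X(G,K_{1,n})$ is simply false.
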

We are grateful to Seth Sullivant for noticing that this is equivalent to $\phi(G,\cdot)$ being constant, apart from the case when $G=\ZZ_2$ and $n=3$, when the associated variety is the whole projective space.
Conjecture \ref{con:constant} also implies the following.
\begin{conjecture}[{\cite[Conjecture 30]{SS}}]
The phylogenetic complexity of $G=\ZZ_2 \times \ZZ_2$ is $4$.
\end{conjecture}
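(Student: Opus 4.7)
The plan has two halves: a lower bound $\phi(\ZZ_2\times\ZZ_2)\geq 4$ and a matching upper bound $\phi(\ZZ_2\times\ZZ_2)\leq 4$, the latter obtained by effectivizing the proof of Theorem \ref{finite phylo} for this specific group and then closing the gap to small $n$ by direct computation. The lower bound is essentially in the literature: in \cite{JaAdvGeom} and \cite{JaJCTA} a specific degree-four binomial $B\in I(X(\ZZ_2\times\ZZ_2,K_{1,n}))$ is identified which generates the ideal only on a Zariski open subset / cuts out the projective scheme. First I would produce $B$ explicitly as a compatible pair of $4\times n$ tables (the minimal case should already appear at $n=4$), and then verify via a finite Gröbner-basis computation in Macaulay2 that $B$ cannot be written as a combination of binomials of degrees $2$ and $3$. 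Equivalently, I would show by a table-manipulation analysis that no sequence of quadratic and cubic moves transforms the first table of $B$ into the second.

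For the upper bound, the strategy is to extract from Theorem \ref{finite phylo} an \emph{effective} threshold $N_0=N_0(\ZZ_2\times\ZZ_2)$ such that $\phi(\ZZ_2\times\ZZ_2,n)=\phi(\ZZ_2\times\ZZ_2,n-1)$ for all $n>N_0$, and then verify $\phi(\ZZ_2\times\ZZ_2,n)\leq 4$ for every $n\leq N_0$ by computer algebra. To obtain $N_0$, I would propagate explicit constants through Definition \ref{def:dots}, Lemma \ref{dots in each row}, Lemma \ref{gen coloring}, Lemma \ref{Two columns not containing dots in same row}, and Lemma \ref{Two columns with same rows}, keeping $|G|=4$ fixed throughout. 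The Klein four-group permits several sharpenings: every element is its own inverse, so the subset $I'$ in Lemma \ref{lem:move} can be chosen with tighter control; the character table is a $4\times 4$ Hadamard matrix, which makes the Fourier description of flows especially transparent; and the double-cover $\ZZ_2\times\ZZ_2\to\ZZ_2\times\ZZ_2$ by diagonal action decomposes compatibility into two independent $\ZZ_2$-conditions, a setting for which the quadratic result of \cite{SS} already applies.

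Next, combining the effective $N_0$ with the induction of Theorem \ref{finite phylo}, one is reduced to the finite list of binomials of degree $>4$ in $I(X(\ZZ_2\times\ZZ_2,K_{1,n}))$ for $n\leq N_0$. For each such $n$ in a tractable range I would run a direct Gröbner-basis / lattice-ideal computation in 4ti2 or Macaulay2, checking that the Markov basis consists only of moves of degree $\leq 4$. An alternative, more hands-on route is to refine the inductive step in Theorem \ref{finite phylo} itself: instead of merely showing $\phi(G,n)\leq\phi(G,n-1)$, one would show that any degree-$d$ binomial for $d\geq 5$ can, after a sequence of quadratic and quartic moves, be written as a sum of binomials of degree $\leq d-1$, which would close the induction at $d=5$ directly, bypassing the computational verification altogether.

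The main obstacle is quantitative. Unwinding Theorem \ref{finite phylo} naively produces a tower of iterated $K(s,r,t)$'s whose size grows faster than any elementary function in $|G|$, so the resulting $N_0$ is far out of reach of any computer algebra system on the Markov basis side. The real work is therefore to cut $N_0$ down to a computationally feasible value, probably single or low double digits, by genuinely new structural arguments specific to $\ZZ_2\times\ZZ_2$: for instance, an explicit normal form for tables up to quadratic, cubic and quartic moves, exploiting that every row is a flow summing to $0$ in a $2$-torsion group, so that each column is determined by the other $n-1$ modulo the Klein relations. Without such a sharpening the two ends (effective bound from the main theorem versus explicit computation) do not meet, which is precisely why the conjecture has resisted proof despite the finiteness result of Theorem \ref{finite phylo}.
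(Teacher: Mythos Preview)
The paper does not prove this statement: it appears in Section~4, \emph{Open questions}, as an unproved conjecture. The only thing the paper says about it is that it would follow from the (also open) Conjecture~\ref{con:constant}. So there is no ``paper's own proof'' to compare against.

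What you have written is not a proof but a research outline, and you are candid about this in your final paragraph: the effective bound $N_0$ coming out of Theorem~\ref{finite phylo} is astronomically large, and you have no concrete mechanism to bring it into computable range. The structural remarks you list (every element is an involution, the Hadamard character table, the decomposition into two $\ZZ_2$-conditions) are suggestive but none of them is developed into an actual argument that reduces $N_0$ or replaces the inductive scheme by a degree-by-degree reduction. In particular, the ``alternative, more hands-on route'' you mention --- showing directly that every degree-$d$ binomial with $d\geq 5$ decomposes via moves of degree $\leq 4$ --- is exactly the content of the conjecture and you give no indication of how to carry it out. As it stands, the proposal identifies the same gap the authors identify (and leave open), without closing it.
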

Yet another direction would be trying to find combinatorial analogs of $\Delta$--modules presented in \cite{MR3018955, MR3430359}. We have not pursued this approach, however we present some similarities. First, in the class of equivariant models one can apply such techniques to prove finiteness on the set--theoretic level \cite{DE}. Second, one of the properties of equivariant models -- a flattening -- is mimicked for group--based models (on the algebra level though, but not on the level of varieties). This is the addition of two group elements that turns a flow of length $n+1$ to a flow of length $n$. The latter was a crucial property that allowed us to obtain the result: generation using the `simple' equations (in our case, quadratic moves) and induced equations for smaller $n$. It would be very desirable to introduce a general setting for polytopes and toric varieties, which would still allow to obtain finiteness results on the ideal-theoretic level.

\vspace{5mm}

\noindent
{\bf Acknowledgements.}\smallskip \\
The first author was supported by Polish National Science Centre grant no. 2015/19/D/ST1/01180 and the Foundation for Polish Science (FNP).
The second author acknowledges fundings from the Doctoral Programme Network at Aalto University.

\bibliographystyle{amsalpha}
{\footnotesize
\bibliography{Xbib}}

\bigskip

\noindent
\footnotesize {\bf Authors' addresses:}

\smallskip

\noindent Mateusz Micha{\l}ek,
Freie Universit\"at Berlin, Germany;
Polish Academy of Sciences, Warsaw, Poland,
{\tt mmichalek@impan.pl}

\smallskip

\noindent Emanuele Ventura,
Aalto University,  Finland,
{\tt emanuele.ventura@aalto.fi}

\end{document}